\newcommand{\Z}{{\mathbb Z}}
\newcommand{\R}{{\mathbb R}}
\newcommand{\de}{{\delta}}
\newcommand{\ph}{{\varphi}}
\newcommand{\eps}{{\varepsilon}}
\newcommand{\Gm}{{\Gamma}}
\newcommand{\Deg}{{\mathrm{Deg}}}
\newcommand{\supp}{{\mathrm{supp}\,}}
\newtheorem{thm}{Theorem}[section]
\newtheorem{cor}[thm]{Corollary}
\newtheorem{lemma}[thm]{Lemma}
\newtheorem{pro}[thm]{Proposition}
\theoremstyle{definition}
\newtheorem{definition}[thm]{Definition}
\newtheorem{example}[thm]{Example}
\newtheorem{remark}[thm]{Remark}
\newcommand{\Hmm}[1]{\leavevmode{\marginpar{\tiny%
$\hbox to 0mm{\hspace*{-0.5mm}$\leftarrow$\hss}%
\vcenter{\vrule depth 0.1mm height 0.1mm width \the\marginparwidth}%
\hbox to
0mm{\hss$\rightarrow$\hspace*{-0.5mm}}$\\\relax\raggedright #1}}}
\begin{document}

\title{Harmonic functions of general graph Laplacians}

\author{Bobo Hua}
\address{Bobo Hua, Max Planck Institute for Mathematics in the Sciences,
04103 Leipzig, Germany.}
\email{bobohua@mis.mpg.de}

\author{Matthias Keller}
\address{Matthias Keller, Einstein Institute of Mathematics, The Hebrew University of
Jerusalem, 91904 Jerusalem, Israel.}
 \email{mkeller@ma.huji.ac.il}

\begin{abstract}We study harmonic functions on general weighted graphs which allow for a compatible intrinsic metric.
We prove an $L^{p}$ Liouville type theorem which is a quantitative
integral $L^{p}$ estimate of harmonic functions analogous to Karp's
theorem for Riemannian manifolds. As corollaries we obtain Yau's
$L^{p}$-Liouville type theorem on graphs,  identify the domain of
the generator of the semigroup on $L^{p}$ and get a criterion for
recurrence. As a side product, we show an analogue of Yau's $L^{p}$
Caccioppoli inequality. Furthermore, we derive various
 Liouville type results for
harmonic functions on graphs and harmonic maps from graphs into Hadamard spaces.
\end{abstract}
\maketitle
\section{Introduction}
 The study of harmonic functions is a fundamental topic
in {various} areas of mathematics. An important question is which
subspaces of harmonic functions are trivial, that is, they contain
only constant functions. Such results are referred to as Liouville
type theorems. In Riemannian geometry $L^{p}$-Liouville type
theorems for harmonic functions were studied for example  by Yau
\cite{Yau76}, Karp \cite{Karp82b}, Li-Schoen \cite{LiSchoen84} and
many others. Karp's criterion was later generalized by Sturm
\cite{Sturm94} to the setting of strongly local regular Dirichlet
forms. Over the years there were {several} attempts to realize an
analogous theorem for graphs, see Holopainen-Soardi
\cite{HolopainenSoardi97},
Rigoli-Salvatori-Vignati~\cite{RigoliSalvatoriVignati97},
Masamune~\cite{Masamune09} and most recently Hua-Jost
\cite{HuaJost13}. In all these works normalized Laplacians were
studied (often with further restrictions on the vertex degree) and
certain criteria, all weaker than Karp's integral estimate, were
obtained. The main challenge when considering graphs is the
non-existence of a chain rule and, moreover, the fact that for
unbounded graph Laplacians the natural graph distance is very often
not the proper analogue to the Riemannian distance in manifolds. In
this paper, we use the newly developed concept of intrinsic metrics
on graphs to prove an analogue to Karp's theorem for general
Laplacians on weighted graphs. Thus, we generalize all earlier
results on graphs not only with respect to the generality of the
setting but also by recovering the precise analogue of Karp's
criterion.

Harmonic maps are very important nonlinear objects in geometric
analysis studied thoroughly by many authors (e.g. Eells-Sampson
\cite{EellsSampson64}, Schoen-Yau \cite{SchoenYau76},
Hildebrandt-Jost-Widman \cite{HildebrandtJostWidman80}). In this
paper, we adopt a definition of harmonic maps between metric measure
spaces introduced by Jost \cite{Jost94,Jost97,Jost97NPC,Jost98}. In
particular, we study harmonic maps from graphs into Hadamard spaces
(i.e. globally non-positively curved spaces, also called
$\mathrm{CAT}(0)$-spaces), studied also by
\cite{KotaniSunada01,IzekiNayatani05,JostTodjihounde07}, and prove
Liouville type theorems in this context. For various Liouville
theorems on manifolds, we refer to
\cite{Cheng80,Kendall90,Tam95,ChengTamWan96} and references therein.
We prove the finite-energy Liouville theorem  for harmonic maps from
graphs into Hadamard spaces analogous to the one in Cheng-Tam-Wang
\cite{ChengTamWan96} on manifolds.

In what follows we first state and discuss our results and refer for
details and precise definitions to Section~\ref{s:setup}. Our
framework are weighted graphs over a discrete measure
space $(X,m)$ introduced in  \cite{KellerLenz12} which includes non
locally finite graphs, (see also \cite{Soardi08}). In this setting a pseudo metric is called
\emph{intrinsic} if the energy measures of distance functions can be
estimated by the measure of the graph (see
Definition~\ref{d:intrinsic}). We further call such a pseudo metric
\emph{compatible} if it has finite jump size and the weighted vertex degree is bounded on each
distance ball (see
Definition~\ref{d:compatible}). As the boundedness of the weighted
vertex degree is implied by finiteness of distance balls which is
equivalent to metric completeness in the case of a path metric on a
locally finite graph, see
\cite[Theorem~A.1]{HuangKellerMasamuneWojciechowski}, this
assumption can be seen as an analogue of completeness in the
Riemannian manifold case. Similarly, Sturm \cite{Sturm94} asks for
precompactness of balls.

Our first main result is the following analogue to Karp's $L^{p}$ Liouville theorem \cite[Theorem~2.2]{Karp82b}, whose proof  is given
in Section~\ref{s:Karp}. A function is called \emph{(sub)harmonic} if it is
in the  domain of the formal Laplacian and the formal Laplacian
applied to this function is pointwise (less than or) equal to zero,
(see Definition~\ref{d:harmonic}).
We denote by  $1_{B_{r}}$
the characteristic function of the balls $B_{r}$, $r\ge0$, which are taken with respect to an intrinsic metric about a fixed vertex $o\in X$.

\begin{thm}[Karp's $L^{p}$ Liouville theorem] \label{t:Karp}
Assume a connected weighted graph allows for a compatible intrinsic metric. Then every non-negative subharmonic function $f$ satisfying
\begin{align*}
    \inf_{r_{0}>0}\int_{r_{0}}^{\infty} \frac{r}{\|f1_{B_{r}}\|_{p}^{p}}dr=\infty,
\end{align*}
for some $p\in(1,\infty)$, is constant.
\end{thm}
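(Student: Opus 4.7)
The plan is to adapt Karp's original strategy for Riemannian manifolds to the discrete setting, with the compatibility of the intrinsic metric playing the role of completeness. The argument splits into three steps.

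\emph{Step 1: A discrete $L^{p}$ Caccioppoli inequality.} I would multiply the subharmonic inequality $\Delta f\le 0$ by a test function of the form $f^{p-1}\eta^{2}$ (with $\eta\ge 0$ of finite support), sum against $m$, and perform a summation by parts. Because the graph Laplacian admits no chain rule, the essential ingredient is the elementary pointwise inequality
\[
(a-b)(a^{p-1}-b^{p-1}) \;\ge\; c_{p}\bigl(a^{p/2}-b^{p/2}\bigr)^{2}, \qquad a,b\ge 0,\ p>1,
\]
combined with Young's inequality to separate $f^{p/2}$-differences from $\eta$-differences and a symmetrization turning $\eta(x)^{2}-\eta(y)^{2}$ into $|\eta(x)-\eta(y)|$. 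The outcome is a Caccioppoli-type estimate of the form
\[
\sum_{x,y}b(x,y)\bigl(f^{p/2}(x)-f^{p/2}(y)\bigr)^{2}\bigl(\eta(x)\wedge\eta(y)\bigr)^{2} \;\le\; C_{p}\sum_{x,y}b(x,y)\bigl(\eta(x)-\eta(y)\bigr)^{2}f(x)^{p},
\]
which is the Yau-type $L^{p}$ Caccioppoli inequality announced in the abstract.

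\emph{Step 2: Intrinsic metric cutoffs.} I would then specialize $\eta=\phi\circ\rho(\cdot,o)$ where $\rho$ is the intrinsic distance and $\phi\colon[0,\infty)\to[0,1]$ is Lipschitz with compact support. By the defining property of an intrinsic metric,
\[
\sum_{y} b(x,y)\bigl(\eta(x)-\eta(y)\bigr)^{2}\;\le\;\|\phi'\|_{\infty}^{2}\,m(x),
\]
so the right-hand side of Caccioppoli is controlled by $\|\phi'\|_{\infty}^{2}\int_{\supp\phi'}f^{p}\,dm$. Compatibility (finite jump size and bounded weighted degree on balls) is used here to guarantee absolute convergence and to justify the summation by parts.

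\emph{Step 3: Karp's iteration.} Given the divergence assumption, for $r_{0}<r_{1}<\dots<r_{N}=R$ I would take $\phi$ to be the piecewise linear function equal to $1$ on $[0,r_{0}]$, to $0$ on $[R,\infty)$, and with slope $-1/(r_{k}-r_{k-1})$ on $[r_{k-1},r_{k}]$. Inserting this choice into the combined bound and optimizing the intermediate radii via a Cauchy--Schwarz step (this is Karp's key trick) gives
\[
\sum_{\substack{x\in B_{r_{0}}\\ y\in X}} b(x,y)\bigl(f^{p/2}(x)-f^{p/2}(y)\bigr)^{2} \;\le\; \frac{C_{p}}{\int_{r_{0}}^{R} r/\|f\,1_{B_{r}}\|_{p}^{p}\,dr}.
\]
Sending $R\to\infty$ forces $f^{p/2}$, hence $f$, to be constant on $B_{r_{0}}$; sending $r_{0}\to\infty$ and invoking connectedness completes the proof.

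The main obstacle is Step 1: the absence of a chain rule makes the summation by parts produce cross-terms containing $f^{p-1}\eta^{2}$ at both endpoints of each edge, and disentangling them so as to isolate $(f^{p/2}(x)-f^{p/2}(y))^{2}$ on the left requires the algebraic inequality above together with a Young's inequality carrying a small parameter to absorb error terms. A secondary technical point, resolved by the compatibility assumption rather than global boundedness of the weighted degree, is the justification of absolute summability throughout, since all the rearrangements are only allowed on balls.
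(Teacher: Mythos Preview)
Your overall architecture matches the paper's: test against $f^{p-1}\eta^{2}$, replace the missing chain rule by an elementary inequality, feed in intrinsic cut-offs, and then iterate over scales. The algebraic identity you use in Step~1, $(a-b)(a^{p-1}-b^{p-1})\ge c_{p}(a^{p/2}-b^{p/2})^{2}$, is a legitimate alternative to the mean-value inequalities the paper uses (their Lemma~\ref{l:MVI}), and the way you invoke compatibility to justify the rearrangements is exactly right.

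The gap is between Step~1 and Step~3. Once you apply Young's inequality and pass to the fully absorbed Caccioppoli bound
\[
\sum_{x,y}\mu_{xy}\bigl(f^{p/2}(x)-f^{p/2}(y)\bigr)^{2}(\eta(x)\wedge\eta(y))^{2}\le C_{p}\sum_{x,y}\mu_{xy}|\nabla_{xy}\eta|^{2}f(x)^{p},
\]
the information needed for Karp's iteration is gone: no choice of piecewise-linear $\phi$ together with ``optimizing the intermediate radii via Cauchy--Schwarz'' produces the bound $C_{p}\big/\!\int_{r_{0}}^{R} r\,\|f1_{B_{r}}\|_{p}^{-p}\,dr$ from this inequality alone. (Try the optimization: minimizing $\sum_{k} a_{k}^{-2}\,[v(r_{k})-v(r_{k-1})]$ over partitions yields $(\sum_k [v(r_k)-v(r_{k-1})]^{1/3})^3/(R-r_0)^2$, not the reciprocal of the Karp integral.)

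What the paper does---and what Karp and Sturm do as well---is to \emph{stop before} the Young absorption. Their key estimate (Lemma~\ref{l:keyestimate}) keeps the right-hand side in bilinear form, and Cauchy--Schwarz is then applied so that the energy over the \emph{annulus} $B_{R}\setminus B_{r}$ reappears as a factor. With $Q_{j}$ the weighted energy in $B_{R_{j}}$ and dyadic $R_{j}=2^{j}R_{0}$, this yields the quadratic recursion
\[
Q_{j+1}^{2}\le \frac{C\,v(R_{j+1})}{(R_{j+1}-R_{j})^{2}}\bigl(Q_{j+1}-Q_{j}\bigr),\qquad v(r)=\|f1_{B_{r}}\|_{p}^{p},
\]
which after dividing by $Q_{j}Q_{j+1}$ telescopes to $\sum_{j}R_{j}^{2}/v(R_{j+1})\le C/Q_{1}$ and hence to your claimed bound. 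So the ``Cauchy--Schwarz step'' that makes Karp work is not an optimization over radii but this splitting that manufactures the difference $Q_{j+1}-Q_{j}$; to salvage your argument you should retain the pre-absorption inequality from Step~1 and run this recursion instead.
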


Clearly, the integral in the theorem above diverges, whenever $0\neq f\in L^{p}(X,m)$. Thus, as an immediate corollary, we get Yau's $L^{p}$ Liouville type theorem \cite{Yau76}.

\begin{cor}[Yau's $L^{p}$ Liouville theorem] \label{c:Yau} Assume a connected weighted graph allows for a compatible intrinsic metric. Then every non-negative subharmonic function in $L^{p}(X,m)$, $p\in(1,\infty)$, is constant.
\end{cor}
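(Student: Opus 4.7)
The plan is to derive this directly from Theorem~\ref{t:Karp} by verifying that the hypothesis of Karp's $L^{p}$ Liouville theorem holds automatically for functions in $L^{p}(X,m)$. Given a non-negative subharmonic $f \in L^{p}(X,m)$, the only work to do is to show that the divergence condition
\[
    \inf_{r_{0}>0}\int_{r_{0}}^{\infty} \frac{r}{\|f1_{B_{r}}\|_{p}^{p}}\,dr=\infty
\]
is satisfied, and then invoke Theorem~\ref{t:Karp} to conclude that $f$ is constant.

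First I would dispose of the trivial case $f\equiv 0$, which is already constant. Otherwise, since $f$ is nonzero somewhere and $X$ is connected, there exists some $r_{1}>0$ for which $\|f 1_{B_{r_{1}}}\|_{p}>0$; the balls $B_{r}$ being monotone in $r$ then guarantee $\|f 1_{B_{r}}\|_{p}>0$ for all $r \ge r_{1}$, so the integrand is well defined on $[r_{1},\infty)$. The key (trivial) observation is the pointwise monotone estimate $\|f 1_{B_{r}}\|_{p}^{p}\le \|f\|_{p}^{p}$, valid for every $r\ge 0$, which yields
\[
    \frac{r}{\|f 1_{B_{r}}\|_{p}^{p}} \;\ge\; \frac{r}{\|f\|_{p}^{p}}
\]
for all $r\ge r_{1}$. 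Since $f\in L^{p}(X,m)$ means $\|f\|_{p}^{p}<\infty$, integrating from any $r_{0}\ge r_{1}$ gives
\[
    \int_{r_{0}}^{\infty} \frac{r}{\|f 1_{B_{r}}\|_{p}^{p}}\,dr \;\ge\; \frac{1}{\|f\|_{p}^{p}}\int_{r_{0}}^{\infty} r\,dr = \infty,
\]
so the infimum over $r_{0}>0$ is still $+\infty$. Theorem~\ref{t:Karp} now applies and forces $f$ to be constant.

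There is no real obstacle here; the entire content of the corollary is the elementary inequality $\|f 1_{B_{r}}\|_{p}\le \|f\|_{p}$ combined with the divergence of $\int r\,dr$. All of the analytic work, including the use of the compatible intrinsic metric, the subharmonicity estimates, and the handling of possibly unbounded graph Laplacians, has already been absorbed into Theorem~\ref{t:Karp}.
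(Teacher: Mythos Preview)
Your proof is correct and follows exactly the same idea as the paper's own one-line proof, which simply notes that the integral in Theorem~\ref{t:Karp} diverges whenever $f\in L^{p}(X,m)$. You have merely spelled out the trivial observation $\|f1_{B_r}\|_p^p\le\|f\|_p^p$ and the divergence of $\int_{r_0}^\infty r\,dr$ in more detail.
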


\begin{remark}\label{r:Karp} (a) The results above imply the corresponding statements for harmonic functions by the simple observation that $f_{+},$ $f_{-}$ and $|f|$ of a harmonic function $f$ are non-negative and subharmonic.

(b) Harmonicity of a function is independent of the
choice of the measure $m$. Hence, for any non-constant harmonic function $f$ on $X,$ we may find a
sufficiently small measure $m$ such that $f\in L^p(X,m)$ for any
$p\in (0,\infty)$, see \cite{Masamune09}. Our theorem states that if we impose the restriction of compatibility on the measure and the metric, then
the $L^p$ Liouville theorem holds for $1<p<\infty.$

(c) Theorem~\ref{t:Karp} generalizes all earlier results on graphs
\cite{HolopainenSoardi97, RigoliSalvatoriVignati97, Masamune09,
HuaJost13} for the case $p\in(1,\infty)$. Not only that our setting
is more general -- as the natural graph distance is always a
compatible intrinsic metric to the normalized Laplacian -- but also
our criterion is more general. In particular, if $f$ satisfies
\begin{align*}
    \limsup_{r\to\infty}\frac{1}{r^2\log r}\|f1_{B_{r}}\|_{p}^{p}<\infty,
\end{align*}
then the integral in Theorem~\ref{t:Karp} diverges. Thus,
Theorem~\ref{t:Karp} is stronger than \cite[Theorem~1.1]{HuaJost13}
(which had only $r^{2}$ rather than $r^{2}\log r$ in the
denominator). The authors of \cite{HuaJost13} observed that for the
normalized Laplacian the case $p\in(1,2]$ can already be obtained by
their techniques, (see  \cite[Remark~3.3]{HuaJost13}). Here, the
missing cases $p\in(2,\infty)$ are treated by {adopting a subtle lemma in}
\cite{HolopainenSoardi97}. Moreover,  our techniques would also
allow a statement such as \cite[Theorem~1.1]{HuaJost13} for the
cases $p<1$.

(d) In \cite{KellerLenz12} discrete measure spaces $(X,m)$ with the
assumption that every infinite path has infinite measure are
discussed (this assumption is denoted by (A) in
\cite{KellerLenz12}). It is not hard to see that for connected
graphs over $(X,m)$ every non-negative subharmonic function
$L^{p}(X,m)$, $p\in[1,\infty)$, is trivial. In fact, from every
non-constant positive subharmonic function we can extract a sequence
of vertices such that the function values increase along this
sequence (compare \cite[Lemma~3.2 and Theorem~8]{KellerLenz12}).
Since this path has infinite measure, the function is not contained
in $L^{p}(X,m)$, $p\in[1,\infty)$. Thus, the only interesting
measure spaces are {those} that contain an
infinite path of finite measure.

(e) Sturm  \cite{Sturm94} proves an analogue for Karp's theorem for
weakly subharmonic functions. This might seem stronger, however, in our setting on graphs weak
solutions of equations are automatically solutions,
\cite[Theorem~2.2 and
Corollary~2.3]{HaeselerKellerLenzWojciechowski12}.
\end{remark}

Corollary~\ref{c:Yau} allows us to explicitly determine the domain
of the generator $L_{p}$ of the semigroup on $L^{p}(X,m)$.  We denote by
$\Delta$ the formal Laplacian with formal domain $F$. (For
definitions see Section~\ref{s:Laplacians}). The proof of the
corollary below is given in Section~\ref{s:corollaries}.

\begin{cor}[Domain of the $L^{p}$ generators] \label{c:domain} Assume a connected weighted graph allows for a compatible intrinsic metric. Then, for $p\in(1,\infty)$, the generator $L_{p}$ is a restriction of $\Delta$ and
\begin{align*}
    D(L_{p})=\{u\in L^{p}(X,m)\cap F\mid \Delta u\in L^{p}(X,m)\}.
\end{align*}
\end{cor}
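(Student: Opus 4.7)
The plan is to establish both inclusions in the claimed equality. For the inclusion $D(L_p)\subseteq\{u\in L^p(X,m)\cap F:\Delta u\in L^p(X,m)\}$, together with the identification $L_pu=\Delta u$ on $D(L_p)$, I would rely on the general Dirichlet form framework for weighted graphs developed in \cite{KellerLenz12}: the $L^p$-semigroup is the extension of the Markovian semigroup associated with the regular Dirichlet form on $L^2(X,m)$, whose generator acts as $\Delta$ on a core of compactly supported functions, and this identification propagates to the $L^p$-level.

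For the reverse inclusion, I would exploit the resolvent. Fix $\lambda>0$ and, given $u\in L^p(X,m)\cap F$ with $\Delta u\in L^p(X,m)$, set
\[
  \tilde u:=(\lambda-L_p)^{-1}(\lambda u-\Delta u)\in D(L_p).
\]
Since $L_p\tilde u=\Delta\tilde u$ by the first inclusion, the difference $w:=u-\tilde u$ lies in $L^p(X,m)\cap F$ and satisfies the eigenfunction equation $\Delta w=\lambda w$. To conclude $u=\tilde u+w\in D(L_p)$, it therefore suffices to prove $w=0$.

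The key step is to establish $w=0$. A direct case analysis on the sign of $w$, using the non-negativity of the edge weights, yields the pointwise Kato-type inequality $\Delta|w|\leq\lambda|w|$, so that $|w|\in L^p(X,m)$ is a non-negative function satisfying a $\lambda$-subharmonic inequality. Although Corollary~\ref{c:Yau} does not apply verbatim (as $|w|$ is not genuinely subharmonic), the Caccioppoli-type cutoff argument underlying the proof of Theorem~\ref{t:Karp} adapts: testing this inequality against $\phi_R^2|w|^{p-1}$, where $\phi_R$ is a cutoff for the intrinsic metric ball $B_R$ with intrinsic gradient bounded by $1/R$, and applying the discrete Green's formula together with the compatibility of the metric, one obtains an estimate of the form
\[
  \lambda\int_{B_R}|w|^p\,dm \leq \frac{C}{R^2}\,\|w\,1_{B_{2R}}\|_p^p.
\]
Since $w\in L^p(X,m)$ the right-hand side vanishes as $R\to\infty$, so $\lambda\|w\|_p^p=0$ and therefore $w=0$.

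The main obstacle is this last step: we need a Liouville-type statement for $L^p$ solutions of $\Delta w=\lambda w$ with $\lambda>0$, which is slightly stronger than Corollary~\ref{c:Yau} itself. The positive coefficient $\lambda$ on the right-hand side of the Kato inequality is precisely what eventually furnishes control on $\lambda\|w\|_p^p$, while the compatibility assumption on the intrinsic metric is what forces the Caccioppoli remainder to vanish in the limit.
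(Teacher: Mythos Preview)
Your overall resolvent strategy is the right one and is essentially what the paper does, but there is a sign error that breaks the argument as written. Since the semigroup is $e^{-tL_p}$, the operator $L_p$ is non-negative and the resolvent guaranteed to exist for $\lambda>0$ is $(\lambda+L_p)^{-1}$, not $(\lambda-L_p)^{-1}$. With the correct resolvent, setting $\tilde u:=(\lambda+L_p)^{-1}(\lambda u+\Delta u)$, the difference $w=u-\tilde u$ satisfies $(\Delta+\lambda)w=0$, i.e.\ $\Delta w=-\lambda w$.

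This sign matters twice. First, with your equation $\Delta w=\lambda w$ the Kato inequality only gives $\Delta|w|\le\lambda|w|$, and when you test against $\phi_R^{2}|w|^{p-1}$ the term $\lambda\int\phi_R^{2}|w|^{p}\,dm$ lands on the \emph{right} side of the resulting inequality, not the left; the displayed estimate $\lambda\int_{B_R}|w|^{p}\,dm\le CR^{-2}\|w1_{B_{2R}}\|_p^{p}$ cannot be derived this way. Second, with the correct sign $\Delta w=-\lambda w$ one checks directly that $\Delta w_{\pm}\le -\lambda w_{\pm}\le 0$, so $w_{+}$ and $w_{-}$ are \emph{genuinely} non-negative subharmonic functions in $L^{p}(X,m)$. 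Corollary~\ref{c:Yau} then applies verbatim and forces $w_{\pm}$ to be constant, hence $w\equiv 0$; no separate Caccioppoli argument is needed at all.

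This is exactly the paper's proof: one observes that $(\Delta+1)f=0$ with $f\in L^{p}\cap F$ makes $f_{+},f_{-}$ non-negative, subharmonic and in $L^{p}$, hence constant by Corollary~\ref{c:Yau}, hence zero; the remaining identification of $D(L_{p})$ is then quoted from \cite[Theorem~5]{KellerLenz12}.
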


\begin{remark}(a) The corollary above generalizes \cite[Theorem~1]{HuangKellerMasamuneWojciechowski} to the case $p\in(1,\infty)$ and settles the question in \cite[Remark~3.6]{HuangKellerMasamuneWojciechowski}. Moreover, it complements \cite[Theorem~5]{KellerLenz12}.

(b) It would be interesting to know whether there is a Liouville type theorem for functions in $D(L_{p})$ without the assumption of compatibility on the metric.
\end{remark}

We get furthermore a sufficient criterion for recurrence analogous
to \cite[Theorem~3.5]{Karp82b} and \cite[Theorem~3]{Sturm94} which
generalizes for example \cite[Theorem~2.2]{DoKa88},
\cite[Corollary~B]{RigoliSalvatoriVignati97},
\cite[Lemma~3.12]{Woess00}, \cite[Corollary~1.4]{Grigoryan09}, \cite[Theorem~1.2]{MasamuneUemuraWang12}
on graphs.  For a characterization of recurrence see Proposition~\ref{p:recurrence} in Section~\ref{s:corollaries}, where also the
proof of the corollary below is given.

\begin{cor}[Recurrence]\label{c:recurrence} Assume a connected weighted graph allows for a compatible intrinsic metric. If
\begin{align*}
    \int_{1}^{\infty} \frac{r}{m({B_{r}})}dr=\infty,
\end{align*}
then the graph is recurrent.
\end{cor}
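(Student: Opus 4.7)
The plan is to combine Karp's $L^{p}$ Liouville theorem (Theorem~\ref{t:Karp}) with the Liouville-type characterization of recurrence that Proposition~\ref{p:recurrence} is to provide. Following the standard picture from the Dirichlet form setting (cf.\ Sturm~\cite{Sturm94}), this characterization should read: the graph is recurrent if and only if every bounded non-negative superharmonic function is constant. I take this as the content of Proposition~\ref{p:recurrence} and reduce the corollary to Theorem~\ref{t:Karp} by a superharmonic-to-subharmonic reflection.

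Concretely, let $h \colon X \to [0,\infty)$ be a bounded non-negative superharmonic function, set $M := \sup_{X} h$, and assume $M > 0$ (otherwise $h \equiv 0$ is already constant). In the sign convention of Definition~\ref{d:harmonic}, superharmonicity of $h$ means $\Delta h \ge 0$, so the reflected function $v := M - h$ satisfies $\Delta v \le 0$ and is a non-negative subharmonic function bounded above by $M$.

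Fix any $p \in (1,\infty)$. The pointwise bound $v \le M$ gives, for every $r > 0$,
\begin{align*}
\|v \, 1_{B_{r}}\|_{p}^{p} \;\le\; M^{p}\, m(B_{r}).
\end{align*}
The hypothesis, together with the fact that the integrand $r/m(B_{r})$ is locally integrable on $(0,\infty)$ (since $m(B_{r}) \ge m(\{o\}) > 0$), forces $\int_{r_{0}}^{\infty} r/m(B_{r})\,dr = \infty$ for every $r_{0} > 0$. Hence
\begin{align*}
\inf_{r_{0} > 0} \int_{r_{0}}^{\infty} \frac{r}{\|v \, 1_{B_{r}}\|_{p}^{p}}\, dr \;\ge\; \frac{1}{M^{p}} \inf_{r_{0} > 0} \int_{r_{0}}^{\infty} \frac{r}{m(B_{r})}\, dr \;=\; \infty.
\end{align*}
Theorem~\ref{t:Karp} then forces $v$, and therefore $h$, to be constant; by Proposition~\ref{p:recurrence} this proves recurrence.

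The argument is essentially immediate once Theorem~\ref{t:Karp} is in hand, and the only point requiring real care is to confirm that Proposition~\ref{p:recurrence} indeed furnishes a Liouville-type characterization compatible with the reflection trick. Should the recurrence criterion in Proposition~\ref{p:recurrence} instead be phrased in dual terms (for instance through capacities, equilibrium potentials, or bounded subharmonic functions), only a short additional step is needed to translate between formulations, or alternatively to apply Theorem~\ref{t:Karp} directly in conjunction with the chosen characterization.
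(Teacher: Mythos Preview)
Your proof is correct and follows essentially the same approach as the paper's: both bound $\|f\,1_{B_r}\|_p^p$ by $\|f\|_\infty^p\, m(B_r)$ for a bounded non-negative subharmonic $f$ and then invoke Theorem~\ref{t:Karp} together with Proposition~\ref{p:recurrence}. The only difference is that Proposition~\ref{p:recurrence}~(iv) in the paper is already stated for bounded subharmonic (equivalently superharmonic) functions, so the paper applies Theorem~\ref{t:Karp} directly without your superharmonic-to-subharmonic reflection $v=M-h$; that step is harmless but unnecessary.
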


Contrary to the normalized Laplacian, \cite[Theorem~1.2]{HuaJost13},
there is no $L^{1}$ Liouville type theorem in the general case.
However, for stochastic complete graphs (see
Section~\ref{s:Counter-examples}) we have the following analogue to
\cite[Theorem~3]{Grigoryan88}, \cite[Theorem~2]{Sturm94}. The proof
following  \cite{Grigoryan99} is given in
Section~\ref{s:Counter-examples}. We also give counter-examples to
$L^1$ Liouville theorem which complement the counter-examples from
manifolds, \cite{Chung83,LiSchoen84}.

\begin{thm}[Grigor'yan's $L^{1}$ theorem] \label{t:L1Liouville} Assume a connected graph is stochastically complete.  Then, every non-negative superharmonic function in $L^{1}(X,m)$ is constant.
\end{thm}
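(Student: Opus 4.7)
The plan is to follow Grigor'yan's semigroup approach from \cite{Grigoryan99}, reducing the $L^{1}$ statement to the $L^{p}$ Liouville result already established in Corollary~\ref{c:Yau}. Given a non-negative superharmonic $f\in L^{1}(X,m)$, I will truncate by setting $f_{n}:=f\wedge n$ for each $n\in\N$. A direct pointwise computation shows that the minimum of two superharmonic functions is superharmonic: at a point where $f_{n}=f$ one has $f_{n}(y)\le f(y)$ for every $y$, so $\Delta f_{n}\ge\Delta f\ge 0$, while at a point where $f_{n}=n$ one has $f_{n}(y)\le n$ for every $y$, so again $\Delta f_{n}\ge 0$. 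Hence $f_{n}$ is non-negative, bounded, and superharmonic, and the estimate $f_{n}^{p}\le n^{p-1}f_{n}$ places it in $L^{p}(X,m)$ for every $p\in[1,\infty]$.

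Next I will use the heat semigroup $P_{t}$ associated to $\Delta$ to show $P_{t}f_{n}=f_{n}$ for all $t\ge 0$. The inequality $P_{t}f_{n}\le f_{n}$ will come from a parabolic minimum principle: since $\Delta f_{n}\ge 0$ and $P_{t}$ preserves non-negativity and (formally) commutes with $\Delta$ on bounded elements of its domain, one has $\Delta P_{t}f_{n}=P_{t}\Delta f_{n}\ge 0$, so $\partial_{t}P_{t}f_{n}=-\Delta P_{t}f_{n}\le 0$. The reverse integral identity will come from stochastic completeness: the self-adjoint duality $\int P_{t}f_{n}\,dm=\int f_{n}\,P_{t}\mathbf{1}\,dm$ together with $P_{t}\mathbf{1}=\mathbf{1}$ yields $\int_{X}P_{t}f_{n}\,dm=\int_{X}f_{n}\,dm$. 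Combined with the pointwise contraction and with $P_{t}f_{n}\ge 0$, this forces $P_{t}f_{n}=f_{n}$ for every $t\ge 0$, and passing to the time derivative at $t=0$ gives $\Delta f_{n}=0$, so $f_{n}$ is in fact harmonic.

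Since $f_{n}$ is a non-negative harmonic function in $L^{2}(X,m)$, Corollary~\ref{c:Yau} applied via Remark~\ref{r:Karp}(a) (the harmonic $f_{n}\ge 0$ is itself non-negative and subharmonic) shows that $f_{n}$ is constant. Letting $n\to\infty$, the monotone limit $f=\lim_{n}f_{n}$ is constant as well, which completes the argument.

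I expect the main technical obstacle to be the rigorous justification of the two semigroup statements $P_{t}f_{n}\le f_{n}$ and $\int_{X}P_{t}f_{n}\,dm=\int_{X}f_{n}\,dm$ in a setting where the formal Laplacian may be unbounded and $f_{n}$ need not belong to the $L^{2}$-domain $D(L_{2})$. I plan to handle this either by a further cutoff approximation together with monotone/dominated convergence on the graph, or by appealing to the minimal-solution characterization of the heat semigroup on weighted graphs established in \cite{KellerLenz12}, which extends the contraction inequality to bounded non-negative superharmonic functions without requiring them a priori to lie in $D(L_{2})$.
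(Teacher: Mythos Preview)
Your approach is genuinely different from the paper's, and it is close to working, but the final step has a real gap: Corollary~\ref{c:Yau} requires the graph to admit a \emph{compatible intrinsic metric}, whereas Theorem~\ref{t:L1Liouville} assumes only connectedness and stochastic completeness. You therefore cannot invoke Corollary~\ref{c:Yau} to conclude that the bounded harmonic functions $f_{n}$ are constant. There is, however, a direct repair that stays within your framework and needs no metric assumption: once $P_{t}f_{n}=f_{n}$ for all $t\ge 0$ and $f_{n}\in L^{2}(X,m)$ (which follows from $f_{n}\in L^{1}\cap L^{\infty}$), you get $f_{n}\in D(L)$ with $Lf_{n}=0$, hence $Q(f_{n})=\langle Lf_{n},f_{n}\rangle=0$, so $E(f_{n})=0$ and connectedness forces $f_{n}$ to be constant.

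For comparison, the paper does not use semigroup invariance at all. It splits into the recurrent case (handled by Proposition~\ref{p:recurrence}) and the transient case, where it compares any non-negative superharmonic $u$ to the Green function: via the minimum principle on an exhaustion one shows $Cu\ge G(o,\cdot)$, and then stochastic completeness gives $\|G(o,\cdot)\|_{1}=\int_{0}^{\infty}e^{-tL}1(o)\,dt=\infty$, so $u\notin L^{1}$. Your route is arguably more streamlined once the domain issues are settled; the paper's route is more robust in that it never needs to place $f_{n}$ in any operator domain. Regarding those domain issues: your heuristic $\Delta P_{t}f_{n}=P_{t}\Delta f_{n}$ is not justified as written (it presupposes $f_{n}\in D(L)$), so you should indeed rely on the minimality characterization from \cite{KellerLenz12} to get $P_{t}f_{n}\le f_{n}$; the mass-preservation identity $\int P_{t}f_{n}\,dm=\int f_{n}\,dm$ follows directly from symmetry of the heat kernel and Fubini for non-negative functions, without any domain hypothesis.
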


For vertices $x,y\in X$ that are connected by an edge, we denote a
directed edge by $xy$ and the positive symmetric edge weight by
$\mu_{xy}$. We define
\begin{align*}
    \nabla_{xy}f=f(x)-f(y).
\end{align*}

The following $L^{p}$ Caccioppoli-type inequality is a side product of our analysis. Such an inequality was proven in \cite{HolopainenSoardi97, HuaJost13, RigoliSalvatoriVignati97} for bounded operators. The classical Caccioppoli inequality is the case $p=2$, which can be found for graphs in \cite{CoulhonGrigoryan98,LinXi10,HuangKellerMasamuneWojciechowski}.

\begin{thm}[Caccioppoli-type inequality] \label{t:Caccioppoli} Assume a connected weighted graph allows for a compatible intrinsic metric and $p\in (1,\infty)$. Then, there is $C>0$ such that for  every non-negative subharmonic function $f$ and all $0<r<R-3s$
\begin{align*}
\sum_{x,y\in B_r}\mu_{xy}(f(x)\vee f(y))^{p-2}
|\nabla_{xy}f|^{2}&\leq \frac{C}{(R-r)^{2}}\|f1_{B_{R}\setminus
B_{r}}\|_{p}^{p},
\end{align*}
where $s$ is the jump size of the intrinsic metric (see Section~\ref{s:intrinsic}).
\end{thm}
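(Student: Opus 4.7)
The plan is to mimic the classical Caccioppoli argument by testing the subharmonicity of $f$ against $g=f^{p-1}\eta^{2}$, where $\eta$ is a Lipschitz cutoff in the intrinsic metric $d$. I would take $\eta\equiv 1$ on $B_{r+s}$, $\eta\equiv 0$ outside $B_{R-s}$, and Lipschitz with constant $1/(R-r-2s)$; the hypothesis $r<R-3s$ leaves room for the two $s$-shifts and ensures $R-r-2s\geq (R-r)/3$, so this Lipschitz constant is controlled by $C/(R-r)$. Since $f\ge0$ and $\eta$ has finite support, $g$ is an admissible test function, and the Green-type identity
\begin{align*}
\sum_{x\in X}g(x)\Delta f(x)\,m(x)=\tfrac{1}{2}\sum_{x,y\in X}\mu_{xy}\nabla_{xy}g\cdot\nabla_{xy}f,
\end{align*}
combined with $\Delta f\le 0$, yields
\begin{align*}
\sum_{x,y\in X}\mu_{xy}\nabla_{xy}(f^{p-1}\eta^{2})\cdot\nabla_{xy}f\le 0. \qquad(\ast)
\end{align*}

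The absence of a chain rule on graphs is the central obstacle; I would replace it by the pointwise inequality
\begin{align*}
(a-b)(a^{p-1}s^{2}-b^{p-1}t^{2})\ge c_{p}(a\vee b)^{p-2}(a-b)^{2}(s^{2}+t^{2})-C_{p}(a^{p}+b^{p})(s-t)^{2}
\end{align*}
for $a,b\ge 0$, $s,t\in[0,1]$, $p\in(1,\infty)$, with $c_{p},C_{p}>0$. The derivation rests on the decomposition
\begin{align*}
a^{p-1}s^{2}-b^{p-1}t^{2}=(a^{p-1}-b^{p-1})\tfrac{s^{2}+t^{2}}{2}+(a^{p-1}+b^{p-1})\tfrac{s^{2}-t^{2}}{2},
\end{align*}
the scalar bound $(a-b)(a^{p-1}-b^{p-1})\ge c_{p}(a\vee b)^{p-2}(a-b)^{2}$ on the first piece (easy for $p\ge 2$, requiring the standard convexity argument for $1<p<2$), and Young's inequality on the cross term, after bounding $a^{p-1}+b^{p-1}\le 2(a\vee b)^{p-1}$, in order to absorb a small fraction of the main term into a residual of the form $(a^{p}+b^{p})(s-t)^{2}$. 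Balancing the Young constants so that the estimate is effective for \emph{every} $p>1$ simultaneously -- particularly in the range $p>2$, where Holopainen--Soardi's refinement is invoked -- is the delicate point.

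Inserting this pointwise inequality into $(\ast)$ and rearranging gives
\begin{align*}
\sum_{x,y\in X}\mu_{xy}(f(x)\vee f(y))^{p-2}|\nabla_{xy}f|^{2}\bigl(\eta(x)^{2}+\eta(y)^{2}\bigr)\le C\sum_{x,y\in X}\mu_{xy}(f(x)^{p}+f(y)^{p})|\nabla_{xy}\eta|^{2}.
\end{align*}
Restricting the left-hand side to $x,y\in B_{r}\subset B_{r+s}$, where $\eta\equiv 1$, recovers the left side of the claimed Caccioppoli inequality (up to a harmless factor $2$). For the right-hand side, the Lipschitz bound $|\nabla_{xy}\eta|\le d(x,y)/(R-r-2s)$ together with a short jump-size check (if $\mu_{xy}>0$ and $x\in B_{r}$, then $y\in B_{r+s}$, forcing $\nabla_{xy}\eta=0$) confines the nonzero contributions to edges with both endpoints in $B_{R}\setminus B_{r}$; the defining inequality of the intrinsic metric, $\sum_{y}\mu_{xy}d(x,y)^{2}\le m(x)$, then yields the bound $C(R-r)^{-2}\|f 1_{B_{R}\setminus B_{r}}\|_{p}^{p}$, completing the proof.
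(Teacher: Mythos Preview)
Your approach is essentially the paper's: both test subharmonicity against $g=f^{p-1}\eta^{2}$ via Green's formula, replace the missing chain rule by the scalar bound $(a-b)(a^{p-1}-b^{p-1})\ge c_{p}(a\vee b)^{p-2}(a-b)^{2}$ (which is exactly Lemma~\ref{l:MVI}(b)), absorb the cross term by Young, and finish with the intrinsic-metric cutoff estimate. The paper organizes this as the intermediate ``key estimate'' (Lemma~\ref{l:keyestimate}) --- obtained via two applications of the Leibniz rule and dropping a non-negative term --- followed by Young's inequality; you instead package the Leibniz/MVI/Young steps into a single pointwise inequality. Your inequality is correct (with $c_{p}=\tfrac{1}{4}((p-1)\wedge 1)$ and $C_{p}$ of order $1/c_{p}$ it can be verified by reducing to a quadratic form in $(s+t,s-t)$), so the two routes are equivalent. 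The Holopainen--Soardi refinement you allude to is only needed for the stronger version with $f^{p-2}(x)+f^{p-2}(y)$ on the left (Remark~\ref{r:Caccioppoli}(b)); for the statement as written the elementary mean-value bound suffices for all $p>1$.

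There is one genuine slip. You assert that $\eta$ has finite support, but in this setting distance balls need \emph{not} be finite --- compatibility only guarantees that $\Deg$ is bounded on each $B_{R}$, not that $B_{R}$ is a finite set. Consequently $g=f^{p-1}\eta^{2}$ is generally not in $C_{c}(X)$, and your invocation of Green's formula needs the version in Lemma~\ref{l:Green}: one takes $U=B_{R}$, uses that $\Deg|_{U}$ is bounded (from compatibility), and checks $g\in L^{p^{*}}(X,m)$, which amounts to $f1_{B_{R}}\in L^{p}(X,m)$. This last condition is implicit in the statement (if $\|f1_{B_{R}}\|_{p}=\infty$ the right-hand side is already infinite), and the paper makes the same tacit assumption when it applies Lemma~\ref{l:keyestimate}. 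With that correction your argument is complete and matches the paper's.
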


\begin{remark}\label{r:Caccioppoli} (a) The theorem above allows for a direct proof of Corollary~\ref{c:Yau}, confer  \cite[Corollary~3.1]{HuaJost13}.

(b) For $p\ge2$, we can strengthen the inequality by replacing
$(f(x)\vee f(y))^{p-2}$ on the left hand side by
$f^{p-2}(x)+f^{p-2}(y)$, see Remark~\ref{r:Caccioppoli2} in Section~\ref{s:Caccioppoli}, where the theorem is proven.
\end{remark}

The following  quantitative consequence of Theorem~\ref{t:Karp} which is a generalization of Corollary~\ref{c:Yau}  has various corollaries that are stated and proven in Section~\ref{s:applicationKarp}.
For an intrinsic metric $\rho$ and a fixed vertex $o\in X$ let
$$\rho_{1}=1\vee\rho(\cdot,o).$$

\begin{thm}\label{t:applicationKarp} Assume a connected weighted graph allows for a compatible intrinsic metric $\rho$. If a non-negative subharmonic function $f$ satisfies
$$f\in L^{p}(X,m\rho_{1}^{-2}),$$
for some $p\in(1,\infty)$,
then $f$ is constant.
\end{thm}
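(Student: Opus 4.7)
The plan is to derive this from Karp's theorem (Theorem~\ref{t:Karp}) by turning the weighted $L^{p}$ condition into the integral criterion for $F(r) := \|f1_{B_{r}}\|_{p}^{p}$. The whole argument is essentially Fubini plus a one-line Cauchy--Schwarz.

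First, I would rewrite the hypothesis using a layer-cake representation. Since $\rho_{1}(x) \ge 1$ and $a^{-2} = \int_{a}^{\infty} 2r^{-3}\,dr$ for $a>0$, Fubini gives
\begin{align*}
\|f\|_{L^{p}(X,m\rho_{1}^{-2})}^{p}
= \int_{X} f^{p}(x) \int_{\rho_{1}(x)}^{\infty}\frac{2}{r^{3}}\,dr\, dm(x)
= \int_{1}^{\infty}\frac{2}{r^{3}}\int_{\{\rho_{1}\le r\}} f^{p}\,dm\, dr
= 2\int_{1}^{\infty}\frac{F(r)}{r^{3}}\,dr,
\end{align*}
where I used that $\{x : \rho_{1}(x)\le r\}$ equals $B_{r}$ for $r\ge 1$ and is empty for $r<1$. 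Hence the assumption of the theorem is exactly that $\int_{1}^{\infty} F(r)/r^{3}\,dr < \infty$.

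Second, I would deduce Karp's criterion by Cauchy--Schwarz. We may assume $f\not\equiv 0$, so that there exists $x_{0}\in X$ with $f(x_{0})>0$ and hence $F(r)>0$ for all $r\ge r_{*}:=\max(1,\rho(x_{0},o))$. From the factorization $1/r=\sqrt{F(r)/r^{3}}\cdot\sqrt{r/F(r)}$ we obtain, for every $R>r_{*}$,
\begin{align*}
\bigl(\log(R/r_{*})\bigr)^{2}
=\left(\int_{r_{*}}^{R}\frac{dr}{r}\right)^{2}
\le \left(\int_{r_{*}}^{R}\frac{F(r)}{r^{3}}\,dr\right)\left(\int_{r_{*}}^{R}\frac{r}{F(r)}\,dr\right).
\end{align*}
The first factor on the right is bounded by $\tfrac{1}{2}\|f\|_{L^{p}(X,m\rho_{1}^{-2})}^{p}<\infty$, so letting $R\to\infty$ forces $\int_{r_{*}}^{\infty} r/F(r)\,dr=\infty$. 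Since $F$ is non-decreasing, the integral $\int_{r_{0}}^{\infty} r/F(r)\,dr$ is infinite for every $r_{0}>0$ as well (for $r_{0}\le r_{*}$ by monotonicity, and for $r_{0}>r_{*}$ because only a finite piece of a divergent integral with positive integrand is removed). Therefore the assumption of Theorem~\ref{t:Karp} is satisfied and $f$ must be constant.

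There is no real obstacle here: Theorem~\ref{t:Karp} does the heavy lifting, and the only thing to be careful about is the cut-off at $r=1$ in the Fubini step, which is handled by the choice $\rho_{1}=1\vee \rho(\cdot,o)$.
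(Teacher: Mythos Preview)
Your proof is correct, but it takes a more elaborate route than the paper's. Both arguments reduce to verifying Karp's integral criterion $\int^{\infty} r/F(r)\,dr=\infty$ with $F(r)=\|f1_{B_r}\|_p^p$, so the strategy is identical; the difference is in how that divergence is obtained.

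The paper uses a one-line pointwise bound: for $r\ge 1$ and $x\in B_r$ one has $\rho_1(x)\le r$, hence $f^p(x)\le r^{2}f^p(x)\rho_1^{-2}(x)$, and summing gives $F(r)\le r^{2}\|f\|_{L^{p}(X,m\rho_1^{-2})}^{p}$. This immediately yields $r/F(r)\ge C/r$, whose integral diverges. Your argument instead first turns the weighted norm into the exact identity $\|f\|_{L^{p}(X,m\rho_1^{-2})}^{p}=2\int_{1}^{\infty}F(r)r^{-3}\,dr$ via layer-cake, and then uses Cauchy--Schwarz on $1/r=\sqrt{F(r)/r^{3}}\cdot\sqrt{r/F(r)}$ to force $\int r/F(r)\,dr=\infty$. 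This is perfectly valid, and the Cauchy--Schwarz trick is a nice device that would transfer to other weights, but here it is a detour compared to the direct pointwise estimate.
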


Next, we turn to harmonic maps from graphs into Hadamard spaces, (see
Section~\ref{s:harmonicmaps}, in particular Definition~\ref{d:harmonicmaps}). We
prove the following consequence of Karp's theorem in
Section~\ref{s:harmonicmaps}.

\begin{thm}[Karp's theorem for harmonic maps] \label{t:harmonicmaps} Assume a connected weighted graph allows for a compatible intrinsic metric $\rho$.
Let $u$ be a harmonic map into an Hadamard space $(Y,d)$. If there
are $p\in(1,\infty)$ and $y\in Y$ such that
\begin{align*}
   d(u(\cdot),y)\in L^{p}(X,m\rho_{1}^{-2}),
\end{align*}
then $u$ is bounded. Moreover, if $m\rho_{1}^{-2}(X)=\infty$ or $y$ is in the image of $u$,  then $u$ is constant.
\end{thm}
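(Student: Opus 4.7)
The plan is to reduce Theorem~\ref{t:harmonicmaps} to the scalar statement Theorem~\ref{t:applicationKarp} via the standard convexity principle for harmonic maps into Hadamard spaces. Specifically, I would set $f := d(u(\cdot),y)$ and argue that $f$ is a non-negative subharmonic function on $X$ that meets the integrability hypothesis of Theorem~\ref{t:applicationKarp}.

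The crucial input is the following fact, which should be available from the harmonic maps setup of Section~\ref{s:harmonicmaps} (in Jost's framework): if $u\colon X\to Y$ is harmonic and $\varphi\colon Y\to\R$ is convex and Lipschitz, then $\varphi\circ u$ is subharmonic on $X$. Since $(Y,d)$ is Hadamard (CAT$(0)$), the distance function $d(\cdot,y)\colon Y\to\R$ is $1$-Lipschitz and convex along geodesics, so $f=d(u(\cdot),y)$ is non-negative and subharmonic. The hypothesis then directly gives $f\in L^{p}(X,m\rho_{1}^{-2})$, and Theorem~\ref{t:applicationKarp} yields that $f\equiv c$ for some constant $c\ge 0$.

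From this it immediately follows that $u(X)$ lies on the metric sphere of radius $c$ about $y$, hence $u$ is bounded. For constancy: if $y$ lies in the image of $u$, pick $x_{0}$ with $u(x_{0})=y$, so $c=d(u(x_{0}),y)=0$ and $u\equiv y$. If $m\rho_{1}^{-2}(X)=\infty$, then $f\equiv c\in L^{p}(X,m\rho_{1}^{-2})$ forces $c=0$, and again $u\equiv y$.

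I do not expect a serious obstacle: the only non-trivial ingredient is the subharmonicity of $d(u(\cdot),y)$, which is the standard consequence of harmonicity plus CAT$(0)$ convexity and is presumably recorded in Section~\ref{s:harmonicmaps}. Everything else is a direct application of the already-established Theorem~\ref{t:applicationKarp} and elementary bookkeeping on the constant $c$.
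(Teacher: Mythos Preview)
Your proposal is correct and matches the paper's proof essentially line by line: the paper records the subharmonicity of $x\mapsto d(u(x),y)$ as Lemma~\ref{l:subharmonic} (via Jensen's inequality for convex functions on Hadamard spaces), applies Theorem~\ref{t:applicationKarp} to conclude this function is a constant $c$, and then disposes of the two constancy cases exactly as you do. The only cosmetic difference is that the paper phrases the subharmonicity via Jensen's inequality rather than a ``convex $\circ$ harmonic is subharmonic'' principle, but the content is identical.
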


Finally, we turn to harmonic functions and maps of finite energy, (for
definitions see Section~\ref{s:Laplacians} and
Section~\ref{s:harmonicmapsoffiniteenergy}). The two theorems below
stand in close relationship to the celebrated theorem of Kendall
\cite[Theorem~6]{Kendall88}, (confer
\cite{HuangKendall91,KuwaeSturm08}). Our first result in this line
is a direct consequence of Theorem~\ref{t:finiteenergyequivalence}
and it is an analogue to Cheng-Tam-Wang
\cite[Theorem~3.1]{ChengTamWan96}.

\begin{thm}\label{t:boundedfiniteengery} Assume that on a graph every harmonic function of finite energy is bounded. Then, every harmonic map from the graph into an Hadamard space
is bounded.
\end{thm}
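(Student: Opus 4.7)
The theorem is advertised as a direct consequence of Theorem~\ref{t:finiteenergyequivalence}, so the plan is to extract the relevant implication from that equivalence and apply it to the given map. Presumably Theorem~\ref{t:finiteenergyequivalence} establishes, on a general graph, a reduction of the boundedness of harmonic maps into Hadamard spaces to the boundedness of certain scalar harmonic functions of finite energy, making the present statement essentially a one-line citation.

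Were I to prove such a reduction from scratch, my approach would be as follows. For a harmonic map $u:X\to Y$ and a fixed reference point $y_{0}\in Y$, the scalar function $f(x)=d(u(x),y_{0})^{2}$ is subharmonic on $X$: indeed, in a CAT(0) space the squared distance is convex, and convex post-composition of a harmonic map into an NPC space yields subharmonicity by Jost's variational definition. To promote this to honest harmonicity, one would project $u$ onto a geodesic ray in $Y$; since nearest-point projection onto a closed convex set in an NPC space is 1-Lipschitz, the projected map has energy controlled by that of $u$, and its harmonicity follows from a standard convex projection argument in the Jost framework. Letting $y_{0}$ and the geodesic vary, one obtains a collection of scalar harmonic functions whose joint boundedness is equivalent to boundedness of $u$.

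Once these scalar functions are shown to have finite energy (the essential content of Theorem~\ref{t:finiteenergyequivalence}), the hypothesis forces each of them to be bounded, which in turn yields boundedness of $u$. The main obstacle sits in Theorem~\ref{t:finiteenergyequivalence} rather than here: one must verify, in the graph-theoretic Jost/Korevaar--Schoen formalism, that the scalar auxiliary maps extracted from $u$ are harmonic of finite energy, and that they suffice to control the image of $u$. A subtle point is that $u$ itself is \emph{not} assumed to have finite energy, so the energy-finiteness of the extracted functions has to come from a more careful construction than naive projection---likely via differences, cut-offs, or a localized energy balance---which is precisely what the equivalence theorem is set up to handle. Granting that equivalence, Theorem~\ref{t:boundedfiniteengery} follows immediately.
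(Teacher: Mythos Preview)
Your proposal rests on a misreading of what Theorem~\ref{t:finiteenergyequivalence} says. It is \emph{not} a reduction of harmonic maps into Hadamard spaces to scalar harmonic functions; it is a purely scalar statement on the graph: every harmonic function of finite energy is bounded if and only if every \emph{non-negative subharmonic} function of finite energy is bounded. Nothing about maps, projections onto geodesics, or CAT(0) targets enters that theorem. Consequently, the machinery you sketch (projecting $u$ onto geodesic rays to manufacture scalar harmonic functions, etc.) is not what the equivalence provides, and your conclusion ``granting that equivalence, the theorem follows'' is not justified.

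The second issue is the finite-energy hypothesis on $u$. You are correct that the theorem \emph{as stated} omits it, but the paper's proof opens with ``Let $u:X\to Y$ be a harmonic map of finite energy,'' and the surrounding section is explicitly about finite-energy maps; the omission is a slip in the statement. Once $u$ has finite energy, the argument is short and does not require any geodesic projection: for a fixed $y_{0}\in Y$ set $f=d(u(\cdot),y_{0})$ (the distance itself, not its square). By Lemma~\ref{l:subharmonic} this $f$ is non-negative and subharmonic, and the triangle inequality $|f(x)-f(y)|\leq d(u(x),u(y))$ gives
\[
E(f)=\tfrac{1}{2}\sum_{x,y}\mu_{xy}(f(x)-f(y))^{2}\leq \tfrac{1}{2}\sum_{x,y}\mu_{xy}\,d^{2}(u(x),u(y))<\infty.
\]
Now Theorem~\ref{t:finiteenergyequivalence} converts the hypothesis (bounded finite-energy harmonic functions) into boundedness of all non-negative finite-energy subharmonic functions, so $f$ is bounded and hence $u$ is bounded. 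Your attempt to dispense with the finite-energy assumption on $u$ via ``differences, cut-offs, or a localized energy balance'' is speculative and, without that assumption, no mechanism in the paper produces finite-energy scalar data from $u$.
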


The second result in this line is an analogue to
\cite[Theorem~3.2]{ChengTamWan96}. {An Hadamard space is called
locally compact if for any point there exists a precompact
neighborhood.}

\begin{thm}\label{t:boundedfiniteengery2}Assume that on a graph every bounded harmonic function  is constant. Then, every finite-energy harmonic map from the graph into a {locally compact} Hadamard space  is constant.
\end{thm}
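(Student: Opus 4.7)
My plan is to combine a Royden-decomposition upgrade of the Liouville hypothesis with Theorem~\ref{t:boundedfiniteengery}, local compactness, and a boundary-trace/maximum-principle argument. In the first step I would show that under the assumption ``every bounded harmonic function is constant'' every harmonic function of finite Dirichlet energy is also constant. This is a classical fact for weighted graphs (cf.\ Soardi, Yamasaki): the space $\mathcal{D}(X)$ of finite-energy functions admits the orthogonal decomposition $\mathcal{D}(X)=\overline{C_c(X)}^{\mathcal{D}}\oplus HD(X)$, where $HD(X)$ denotes the finite-energy harmonic functions; a truncation-and-Dirichlet-minimisation argument identifies $HD(X)$ with bounded harmonic functions up to approximation, and the Liouville hypothesis collapses $HD(X)$ to $\mathbb{R}\cdot\mathbf{1}$. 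The hypothesis of Theorem~\ref{t:boundedfiniteengery} is therefore fulfilled, so our map $u$ has bounded image; since a locally compact Hadamard space is proper by the Hopf--Rinow--Cohn--Vossen theorem for complete geodesic spaces, the closure $K:=\overline{u(X)}$ is compact in $Y$.

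The heart of the argument is to deduce that $u$ admits a unique trace $y_\infty\in K$ at infinity, i.e., $u(x_n)\to y_\infty$ along every sequence $x_n\to\infty$ in $X$. Suppose, for contradiction, that two escape sequences $(x_n)$ and $(x_n')$ produced distinct limits $y_0\neq y_1\in K$. The function
\begin{align*}
\psi(x):=d^2(u(x),y_0)-d^2(u(x),y_1)
\end{align*}
is bounded and of finite Dirichlet energy (via $|\psi(x)-\psi(y)|\leq C\,d(u(x),u(y))$ with $C$ controlled by $\mathrm{diam}(K)$), and satisfies $\psi(x_n)\to -d^2(y_0,y_1)$ and $\psi(x_n')\to +d^2(y_0,y_1)$. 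The Royden decomposition applied to $\psi$ yields $\psi=\psi_0+c$ with $\psi_0\in\overline{C_c(X)}^{\mathcal{D}}$ and $c$ a constant (the harmonic part, which lies in $HD(X)=\mathbb{R}$ by the first step); since functions in $\overline{C_c(X)}^{\mathcal{D}}$ have vanishing trace on the Royden harmonic boundary, $\psi$ must admit the single limit value $c$ at infinity, a contradiction if $y_0\neq y_1$.

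Finally, set $f(x):=d^2(u(x),y_\infty)\geq 0$. The harmonic-map equation $u(x)=\operatorname{arg\,min}_{z\in Y}\sum_y\mu_{xy}d^2(u(y),z)$ combined with the quadrilateral inequality in $\mathrm{CAT}(0)$ spaces yields the pointwise improvement $\Delta f\geq 2 e(u)\geq 0$, where $e(u)$ is the energy density of $u$, so $f$ is subharmonic; by the previous step $\limsup_{x\to\infty}f(x)=0$. Applying the standard maximum principle on an exhaustion $B_n\uparrow X$ gives $\max_{B_n}f\leq\max_{\partial B_n}f\to 0$, forcing $f\equiv 0$ and hence $u\equiv y_\infty$. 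The principal obstacle is the boundary-trace step: invoking the Royden harmonic boundary formalism in the generality of weighted graphs over arbitrary discrete measure spaces requires care, and in its absence an alternative route is to pair $\Delta f$ against $\psi-c\in\overline{C_c(X)}^{\mathcal{D}}$ through the Dirichlet bilinear form and combine the resulting identity with $\Delta f\geq 2 e(u)\geq 0$ and the finite energy of $u$ to force $E(u)=0$ directly.
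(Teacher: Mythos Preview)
Your first two steps---upgrading the Liouville property from bounded to finite-energy harmonic functions via Royden/Virtanen, and then invoking Theorem~\ref{t:boundedfiniteengery} to obtain boundedness of $u$---match the paper exactly. The paper, however, finishes in one line: once $u$ is bounded, it quotes the Kuwae--Sturm version of Kendall's theorem (bounded harmonic maps into locally compact Hadamard spaces are constant whenever bounded harmonic functions are), and is done. You instead attempt to reprove that Kendall-type statement from scratch, and this is where the proposal breaks.

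The gap is precisely the one you flag. From $\psi_0\in\overline{C_c(X)}^{\mathcal{D}}$ you conclude that $\psi$ ``must admit the single limit value $c$ at infinity''. That does not follow: functions in $D_0=\overline{C_c(X)}^{\mathcal{D}}$ vanish on the \emph{harmonic} part of the Royden boundary, but an arbitrary escape sequence $x_n\to\infty$ in $X$ accumulates at points of the full Royden boundary, where $\psi_0$ need not vanish. Vanishing on the harmonic boundary simply does not force pointwise decay at infinity on the graph, so you cannot contradict $\psi(x_n)\to -d^2(y_0,y_1)$ and $\psi(x_n')\to +d^2(y_0,y_1)$ this way. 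Without the trace $y_\infty$, the subsequent maximum-principle step has no starting point. Your proposed alternative---pairing $\Delta f$ against $\psi-c$---is not a workable substitute as written: at that stage $y_\infty$ (and hence $f$) is undefined, $\psi$ was introduced only inside a proof by contradiction, and you would still need to justify the integration by parts on the whole graph, which is exactly the kind of boundary-at-infinity issue you are trying to avoid.

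If you want to keep the direct route, the correct tool is the Royden-boundary \emph{maximum principle} (for bounded finite-energy subharmonic $g$ one has $\sup_X g=\sup_{\partial_h X} g$, cf.\ Soardi), together with the fact that the Liouville hypothesis collapses the harmonic boundary to a single point; but carrying this out carefully essentially reproduces the Kuwae--Sturm argument. The paper's choice to cite Kendall's theorem as a black box is the efficient path here.
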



The paper is organized as follows. In the next section, we introduce
the involved concepts and recall some basic inequalities.
Section~\ref{s:proofs} is devoted to the proofs of
Theorem~\ref{t:Karp}, Theorem~\ref{t:Caccioppoli} and the
corollaries above.
The proof of Theorem~\ref{t:L1Liouville} and counter-examples to an $L^{1}$-Liouville type statement are given in
Section~\ref{s:Counter-examples}.
In Section~\ref{s:applicationKarp} we prove
Theorem~\ref{t:applicationKarp} and derive various corollaries. Harmonic maps from graphs into
Hadamard spaces are discussed in  Section~\ref{s:harmonicmaps}.
Theorem~\ref{t:harmonicmaps} is proven in
Section~\ref{s:harmonicmapsproof} and
Theorems~\ref{t:boundedfiniteengery} and~\ref{t:boundedfiniteengery2} are proven in
Section~\ref{s:harmonicmapsoffiniteenergy}. Several applications are discussed in Section~\ref{s:HarmonicMapsmeasure}.
\medskip

Throughout this paper $C$ always denotes a constant that  might change from line to line. Moreover, we use the convention that $\infty\cdot0=0$, (which only appears in expressions such as $f^{-q}(x)\nabla_{xy}f$ with $f(x)=f(y)=0$ and $q>0$).


\section{Set-up and preliminaries}\label{s:setup}


\subsection{Weighted graphs}
Let $X$ be a countable discrete set and $m:X\to(0,\infty)$. Extending $m$ additively to sets, $(X,m)$ becomes a measure space with a measure of full support. A
graph over $(X,m)$ is induced by an edge weight function
$\mu:X\times X\to[0,\infty)$, $(x,y)\mapsto\mu_{xy}$ that is
symmetric, has zero diagonal and satisfies
\begin{align*}
    \sum_{y\in X}\mu_{xy}<\infty,\qquad x\in X.
\end{align*}
If $\mu_{xy}>0$ we write $x\sim y$ and let $xy$ and $yx$ be the
oriented edges of the graph.  We write $xy\subset A$ for a set
$A\subseteq X$ if both of the vertices of the edge $xy$ are contained
in $A,$ i.e., $x,y\in A.$  When we fix an orientation for the edges we denote the directed edges often by $e$.

We refer to the triple $(X,\mu,m)$ as a \emph{weighted graph}. We
assume the graph is \emph{connected}, that is for every two
vertices $x,y\in X$ there is a path  $x=x_{0}\sim
x_{1}\sim\ldots\sim x_{n}=y$.

The spaces $L^{p}(X,m)$, $p\in [1,\infty),$ and $L^{\infty}(X)$ are defined in the
natural way. For $p\in[1,\infty)$, let $p^{*}$ be its H\"older dual,
i.e., $\frac{1}{p}+\frac{1}{p^{*}}=1$.


\subsection{Laplacians and (sub)harmonic functions}\label{s:Laplacians}
We define the \emph{formal Laplacian} $\Delta$ on the \emph{formal domain}
\begin{align*}
    F(X)=\{f:X\to\R\mid \sum_{y\in X}\mu_{xy}|f(y)|<\infty\mbox{ for all }x\in X\},
\end{align*}
by
\begin{align*}
    \Delta f(x)=\frac{1}{m(x)}\sum_{y\in X}\mu_{xy}(f(x)-f(y)).
\end{align*}

\begin{definition}[Harmonic function]\label{d:harmonic} A function $f:X\to\R$ is called \emph{harmonic}  (\emph{subharmonic}, \emph{superharmonic}) if $f\in F(X)$ and $\Delta f=0$, ($\Delta f\le0$, $\Delta f\ge0$).
\end{definition}

Obviously, the measure does not play a role in the definition of harmonicity. We denote by $L$ the positive selfadjoint restriction of $\Delta$ on
$L^{2}(X,m)$ which arises from the closure $Q$ of the restriction of the quadratic form $E:\{X\to\R\}\to[0,\infty]$
$$E(f)=\frac{1}{2}\sum_{x,y\in X}\mu_{xy}|\nabla_{xy} f|^{2}$$ to
$C_{c}(X)$, the space of finitely supported
functions, (for details see \cite{KellerLenz12}). Since $Q$ is a
Dirichlet form, the semigroup $e^{-tL}$, $t\ge0$, extends to a
$C_{0}$-semigroup on $L^{p}(X,m)$, $p\in[1,\infty)$ (resp. a weak $C_{0}$-semigroup for $p=\infty$). We denote the generators of these semigroups by $L_{p}$, $p\in[1,\infty)$. Moreover, we say a  function $f$ has \emph{finite energy} if $E(f)<\infty$.


\subsection{Intrinsic metrics}\label{s:intrinsic}

Next, we introduce the concept of intrinsic metrics. A pseudo metric is a symmetric map  $X\times X\to[0,\infty)$ with zero diagonal which satisfies the triangle inequality.

\begin{definition}[Intrinsic metric]\label{d:intrinsic} A pseudo metric $\rho$ on $X$ is called an \emph{intrinsic metric} if
\begin{align*}
\sum_{y\in X}\mu_{xy}\rho^{2}(x,y)\leq m(x),\qquad x\in X.
\end{align*}
\end{definition}

If for a function $f:X\to\R$ the map
$\Gm(f):x\mapsto\sum_{y\in X}\mu_{x y}|\nabla_{xy} f|^{2}$ takes
finite values, then $\Gm(f)$ defines the energy measure of $f$. Thus, a
pseudo metric $\rho$ is intrinsic if the energy measures
$\Gm(\rho(x,\cdot))$, $x\in X$, are absolutely continuous with
respect to $m$ with Radon-Nikodym derivative
$\frac{d}{dm}\Gm(\rho(x,\cdot))=\Gm(\rho(x,\cdot))/m$ satisfying
$\Gm(\rho(x,\cdot))/m\leq1$.

In various situations the natural graph distance proves to be insufficient for the investigations of unbounded Laplacians, see \cite{Wojciechowski1,Wojciechowski2,KellerLenzWojciechowski}. For this reason the concept of intrinsic metrics developed in \cite{FrankLenzWingert12}   for regular Dirichlet forms  received quite some attention as a candidate to overcome these problems. Indeed, intrinsic metrics already have been applied successfully to various problems on graphs \cite{BauerHuaKeller,BauerKellerWojciechowski,FOLZ, FOLZ2,HuangKellerMasamuneWojciechowski} and related settings \cite{GHM}.

The \emph{jumps size} $s$ of a pseudo metric is given by
\begin{align*}
    s:=\sup\{\rho(x,y)\mid x,y\in X, x\sim y\}\in[0,\infty].
\end{align*}

From now on, $\rho$ always denotes an intrinsic metric and $s$ denotes its jump size.\medskip

We fix a base point  $o\in X$ which we suppress in notation and
denote the distance balls by
\begin{align*}
    B_{r}=\{x\in X\mid\rho(x,o)\leq r\},\qquad r\ge0.
\end{align*}
Since $\rho$ takes values in $[0,\infty)$ in our setting, the results are indeed
independent of the choice of $o$. For $U\subseteq X$, we write
$B_{r}(U)=\{x\in X\mid\rho(x,y)\leq r \mbox{ for some }y\in U\}$,
$r\geq 0$.

Define the \emph{weighted vertex degree} $\mathrm{Deg}:X\to[0,\infty)$ by
\begin{align*}
    \mathrm{Deg}(x)=\frac{1}{m(x)}\sum_{y\in X}\mu_{xy},\qquad x\in X.
\end{align*}

\begin{definition}[Compatible metric]\label{d:compatible} A pseudo metric on $X$ is called \emph{compatible} if it has finite jump size and the restriction of $\mathrm{Deg}$ to every distance ball is bounded, i.e., $\mathrm{Deg}|_{B_r}\leq C(r)<\infty$ for all $r\ge0$.
\end{definition}

\begin{example}\label{ex:intrinsic}
(a) For any  given weighted graph there is an intrinsic path metric defined by
\begin{align*}
    \de(x,y)=\inf_{x=x_{0}\sim\ldots\sim x_{n}=y}\sum_{i=0}^{n-1} (\mathrm{Deg}(x_{i})\vee\mathrm{Deg}(x_{i+1}))^{-\frac{1}{2}}.
\end{align*}
This intrinsic metric can be turned into an intrinsic metric
$\de_{r}$ with finite jump size $s=r$ by taking the path metric with
edge weights $\de(x,y)\wedge r$, $x\sim y$. In many cases, neither
$\de_{r}$ nor $\de$ is compatible.

(b) If the measure $m$ is larger than  the measure $n(x)=\sum_{y\in
X}\mu_{xy}$, $x\in X$, then the natural graph distance (i.e., the
path metric with edge weights $1$) is an intrinsic metric which is
compatible since $s=1$ and $\mathrm{Deg}\le1$ in this case.
\end{example}

\begin{remark}(a)
In view of Example~\ref{ex:intrinsic}~(b) it is apparent that  \cite[Theorem~1.1]{HuaJost13} is included in Theorem~\ref{t:Karp}.

(b) In \cite[Theorem~A.1]{HuangKellerMasamuneWojciechowski} a Hopf-Rinow type
theorem is shown which states that for a locally finite graph a path
metric is complete if and only if all balls are finite. Thus, compatibility can be seen as
a completeness assumption of the graph.

(c) It is not hard to see that there are graphs that do not allow
for a compatible intrinsic metric. However, to a given edge weight
function $\mu$ and a pseudo metric $\rho$, we can always assign a
minimal measure $m$ such that $\rho$ is intrinsic, i.e., let
$m(x)=\sum_{y\in X}\mu_{xy}\rho^{2}(x,y)$, $x\in X$. If $\rho$
already has finite jump size and all balls are finite, then $\rho$
is automatically compatible.

(d) The assumption that $\mathrm{Deg}$ is bounded on distance balls is equivalent to either of the following assumptions
\begin{itemize}
  \item [(i)] The restriction of $\Delta$ to any distance ball (with Dirichlet boundary conditions) is a bounded operator.
  \item [(ii)] The Radon-Nikodym derivative of the  measure $n$ given by   $n(x)=\sum_{y}\mu_{xy}$, $x\in X$, with respect to the measure $m$ is bounded on the distance balls .
\end{itemize}
The equivalence of (i) follows from Theorem~\cite[Theorem~9.3]{HaeselerKellerLenzWojciechowski12}
and the one of (ii) is obvious.
\end{remark}

In the subsequent, we will make use of the  cut-off function $\eta=\eta_{r,R}$, $0\leq r<R$, on $X$ given by
\begin{align*}
    \eta=1\wedge \Big(\frac{R-\rho(\cdot,o)}{R-r}\Big)_{+}.
\end{align*}


\begin{lemma}\label{l:cutoff}Let $\eta=\eta_{r,R}$, $0<r<R$, be given as above. Then,
\begin{itemize}
  \item [(a)] $\eta|_{B_{r}}\equiv 1$ and $\eta|_{X\setminus B_{R}}\equiv0$.
  \item [(b)] For $x\in X$,
  $$\sum_{y\in X}\mu_{xy}|\nabla_{xy}\eta|^{2}\leq \frac{1}{(R-r)^{2}}1_{B_{R+s}\setminus B_{r-s}}(x)m(x).$$
\end{itemize}
\end{lemma}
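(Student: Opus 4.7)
The plan is to prove (a) by direct inspection of the definition, then reduce (b) to two observations: a Lipschitz bound on $\eta$ with respect to $\rho$, and a support argument using the jump size $s$.

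For (a), if $x\in B_{r}$ then $\rho(x,o)\le r$, so $(R-\rho(x,o))/(R-r)\ge 1$ and the minimum with $1$ gives $\eta(x)=1$. If $x\in X\setminus B_{R}$ then $R-\rho(x,o)<0$, so the positive part vanishes and $\eta(x)=0$.

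For (b), I would first observe that the real function $\varphi\colon t\mapsto 1\wedge((R-t)/(R-r))_{+}$ is $(R-r)^{-1}$-Lipschitz on $[0,\infty)$, since it is the composition of the $1$-Lipschitz operations $t\mapsto (R-t)/(R-r)$ (rescaled) with $u\mapsto u_{+}$ and $u\mapsto 1\wedge u$. Combined with the reverse triangle inequality $|\rho(x,o)-\rho(y,o)|\le \rho(x,y)$, this yields
\begin{align*}
|\nabla_{xy}\eta|=|\varphi(\rho(x,o))-\varphi(\rho(y,o))|\le\frac{\rho(x,y)}{R-r}.
\end{align*}
Summing and invoking the defining inequality of an intrinsic metric,
\begin{align*}
\sum_{y\in X}\mu_{xy}|\nabla_{xy}\eta|^{2}\le\frac{1}{(R-r)^{2}}\sum_{y\in X}\mu_{xy}\rho^{2}(x,y)\le\frac{m(x)}{(R-r)^{2}}.
\end{align*}

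It remains to insert the indicator $1_{B_{R+s}\setminus B_{r-s}}(x)$, i.e., to show that the left-hand side vanishes whenever $x\in B_{r-s}$ or $x\notin B_{R+s}$. Only edges $xy$ with $\mu_{xy}>0$ contribute, and for such edges $\rho(x,y)\le s$ by definition of the jump size. If $\rho(x,o)\le r-s$, then $\rho(y,o)\le r$, so by (a) both $\eta(x)$ and $\eta(y)$ equal $1$ and $\nabla_{xy}\eta=0$. If $\rho(x,o)>R+s$, then $\rho(y,o)>R$, so by (a) both $\eta(x)$ and $\eta(y)$ equal $0$. Combining this support statement with the previous estimate gives the claimed bound.

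The whole argument is routine; the only point that requires a little care is the double offset of $s$ in the indicator set, which comes from the fact that the jump size controls how far the edge structure can push the support of $\nabla\eta$ in either direction away from the level sets of $\rho$. No serious obstacle is anticipated.
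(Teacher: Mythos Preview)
Your proof is correct and follows essentially the same route as the paper: the paper records (a) as obvious and deduces (b) from the single pointwise estimate $|\nabla_{xy}\eta|\le \frac{1}{R-r}\rho(x,y)1_{B_{R+s}\setminus B_{r-s}}(x)$ together with the intrinsic metric property, which is exactly the Lipschitz bound plus the jump-size support argument you spell out.
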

\begin{proof}(a) is obvious from the definition of $\eta$ and (b) follows directly from $|\nabla_{xy}\eta|\leq \frac{1}{R-r}\rho(x,y)1_{B_{R+s}\setminus B_{r-s}}(x)$ for $x\sim y$ and the intrinsic metric property of $\rho$.
\end{proof}


\subsection{Green's formula, Leibniz rules and mean value {theorem}}
We first prove a Green's formula which is an $L^{p}$ version of the one in \cite{HuangKellerMasamuneWojciechowski}.

\begin{lemma}[Green's formula]\label{l:Green}Let $p\in[1,\infty)$, $U\subseteq X$ and assume $\mathrm{Deg}$ is bounded on $U$. Then for all $f$ with $f1_{U}\in L^{p}(X,m)\cap F(X)$ and  $g\in L^{^{p^{*}}}(X,m)$ with $B_{s}(\supp g)\subseteq U$
\begin{align*}
   \sum_{x\in X}(\Delta f)(x)g(x)m(x)=\frac{1}{2}\sum_{x,y\in U}\mu_{xy} \nabla_{xy}f \nabla_{xy}g.
\end{align*}
\end{lemma}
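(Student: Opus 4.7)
The plan is to first localize the computation to $U$ using the jump size hypothesis, then prove absolute convergence by H\"older so that Fubini applies, and finally use the symmetry of $\mu$ to pass to the symmetric form on the right-hand side.

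First I would observe that by the jump size condition, whenever $x \in V := \supp g$ and $y \sim x$ we have $\rho(x,y) \le s$, hence $y \in B_s(V) \subseteq U$. Therefore in the expression $\sum_x \Delta f(x) g(x) m(x) = \sum_{x\in V} g(x)\sum_{y\sim x}\mu_{xy}(f(x)-f(y))$ the inner sum effectively runs over $y \in U$, and the double sum is supported on $V \times U \subseteq U \times U$. An analogous remark shows that the right-hand side, $\sum_{x,y\in X}\mu_{xy}\nabla_{xy}f\,\nabla_{xy}g$, is also supported on $U\times U$, since $(g(x)-g(y))\neq 0$ forces $x\in V$ or $y\in V$, in either case keeping the pair in $U\times U$.

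Next I would establish absolute convergence of $\sum_{x,y}\mu_{xy}|f(x)-f(y)||g(x)|$ by splitting $|f(x)-f(y)|\le|f(x)|+|f(y)|$ and applying H\"older twice. For the diagonal piece $\sum_x|g(x)||f(x)|\sum_y\mu_{xy} \le C\sum_{x\in V}|g(x)||f(x)|m(x)\le C\|g\|_{p^*}\|f 1_U\|_p$, using $\mathrm{Deg}|_U\le C$. For the off-diagonal piece I would apply H\"older on $\mu_{xy} = \mu_{xy}^{1/p}\mu_{xy}^{1/p^*}$:
\begin{align*}
\sum_{x\in V,y\in U}\mu_{xy}|g(x)||f(y)| \le \Bigl(\sum_{x,y}\mu_{xy}|g(x)|^{p^*}\Bigr)^{1/p^*}\Bigl(\sum_{x,y}\mu_{xy}|f(y)|^p\Bigr)^{1/p},
\end{align*}
and the boundedness of $\mathrm{Deg}$ on $U$ (applied after exchanging the order of summation, which is legitimate by non-negativity) bounds both factors by $C\|g\|_{p^*}^{p^*}$ and $C\|f 1_U\|_p^p$ respectively.

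Once absolute convergence is in hand, Fubini lets me freely reorder. I rewrite
\begin{align*}
\sum_x \Delta f(x) g(x) m(x) = \sum_{x,y\in U}\mu_{xy}(f(x)-f(y))g(x),
\end{align*}
relabel $x\leftrightarrow y$ and use $\mu_{xy}=\mu_{yx}$ to get the same quantity equal to $-\sum_{x,y\in U}\mu_{xy}(f(x)-f(y))g(y)$, and then average the two identities to obtain $\tfrac{1}{2}\sum_{x,y\in U}\mu_{xy}\nabla_{xy}f\,\nabla_{xy}g$, as desired. The only real obstacle is the absolute convergence step: making sure that the $\mathrm{Deg}$-bound on $U$ (rather than merely on $V=\supp g$) is enough to control the mixed term $\sum_{x\in V, y\in U}\mu_{xy}|g(x)||f(y)|$, which is why the hypothesis $B_s(\supp g)\subseteq U$ is used together with $\mathrm{Deg}|_U\le C$; everything else is bookkeeping via H\"older.
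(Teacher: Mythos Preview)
Your proposal is correct and follows exactly the approach the paper indicates: the paper's own proof is a two-line sketch (``straightforward algebraic manipulation'' plus ``H\"older's inequality and the boundedness assumption on $\Deg$''), and you have supplied precisely those details. One small remark: your H\"older factorization $\mu_{xy}=\mu_{xy}^{1/p}\mu_{xy}^{1/p^*}$ is written for $p\in(1,\infty)$; for $p=1$ (so $p^*=\infty$) you should instead pull out $\|g\|_\infty$ directly, which makes the off-diagonal bound even simpler.
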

\begin{proof} The formal calculation in the proof of Green's formula is a straightforward algebraic manipulation. To ensure that all involved terms converge absolutely, one invokes H\"older's inequality and the boundedness assumption on $\Deg$ (confer the proof of Lemma~3.1 and~3.3 in \cite{HuangKellerMasamuneWojciechowski}).
\end{proof}

The following Leibniz rules follow by direct computations.
\begin{lemma}[Leibniz rules]\label{l:Leibniz} For all $x,y\in X$, $x\sim y$ and $f,g:X\to\R$
\begin{align*}
    \nabla_{xy}(fg)
    &=f(y)\nabla_{xy}g+g(x)\nabla_{xy}f\\
    &= f(y)\nabla_{xy}g+g(y)\nabla_{xy}f +\nabla_{xy}f\nabla_{xy}g.
\end{align*}
\end{lemma}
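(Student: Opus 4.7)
The plan for both identities is pure pointwise algebra: since the statement concerns a single fixed edge $xy$, it involves only the four real numbers $f(x),f(y),g(x),g(y)$, so no summability or convergence issue arises. My intent is simply to substitute the definition $\nabla_{xy}h = h(x)-h(y)$ into each side and verify the equality.

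For the first identity, I would expand
\[
f(y)\nabla_{xy}g + g(x)\nabla_{xy}f = f(y)(g(x)-g(y)) + g(x)(f(x)-f(y))
\]
and observe that the cross term $f(y)g(x)$ appears with both signs and cancels, leaving $f(x)g(x) - f(y)g(y)$, which is exactly $\nabla_{xy}(fg)$.

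For the second identity, the slickest route is to deduce it directly from the first by writing $g(x) = g(y) + \nabla_{xy}g$ in the term $g(x)\nabla_{xy}f$; this substitution produces the extra summand $\nabla_{xy}f\nabla_{xy}g$ and simultaneously replaces $g(x)$ by $g(y)$ in the remaining piece. If one prefers a from-scratch check, expanding $\nabla_{xy}f\nabla_{xy}g = (f(x)-f(y))(g(x)-g(y))$ and combining with $f(y)\nabla_{xy}g + g(y)\nabla_{xy}f$ again collapses to $f(x)g(x)-f(y)g(y)$ after the symmetric cross terms cancel. There is no genuine obstacle here; the only step worth noting is careful bookkeeping of signs, and the symmetric form of the second identity is arguably the more useful one for later applications because it isolates the ``error term'' $\nabla_{xy}f\nabla_{xy}g$ that will enter Caccioppoli-type estimates.
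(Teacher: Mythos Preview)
Your proof is correct and matches the paper's approach exactly: the paper simply states that the Leibniz rules ``follow by direct computations,'' which is precisely the pointwise algebraic expansion you carry out. There is nothing to add.
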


A fundamental difference of Laplacians on graphs and on manifolds is
the absence of a chain rule in the graph case. In particular,
existence of a chain rule can be used as a characterization for a
regular Dirichlet form to be strongly local. We circumvent this
problem by using the mean value {theorem} from
calculus instead. In particular, for a continuously differentiable
function $\phi:\R\to\R$ and $f:X\to\R$, we have
\begin{align*}
    \nabla_{xy} (\phi\circ f)=\phi'(\zeta)\nabla_{xy}f,\qquad\mbox{for some }\zeta\in[f(x)\wedge f(y),f(x)\vee f(y)].
\end{align*}
In this paper we will apply this formula to get estimates  for the function $\phi:t\mapsto t^{p-1}$, $p\in(1,\infty)$. However, we need a refined inequality as it was already used in the proof of \cite[Theorem~2.1]{HolopainenSoardi97}. For the convenience of the reader, we include a short proof here.

\begin{lemma}[Mean value {inequalities}]\label{l:MVI} For all $f:X\to\R$ and $x\sim y$ with $\nabla_{xy}f\ge0,$
\begin{itemize}
  \item [(a)] $\nabla_{xy}f^{p-1}\ge \frac{1}{2}(f^{p-2}(x)+f^{p-2}(y))\nabla_{xy}f$, for $ p\in[2,\infty),$
  \item [(b)] $\nabla_{xy}f^{p-1}\ge C (f(x)\vee f(y))^{p-2}\nabla_{xy}f$, for $ p\in(1,\infty)$, where $C=(p-1)\wedge 1$.
\end{itemize}
\end{lemma}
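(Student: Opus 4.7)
The plan is to set $a=f(x)$ and $b=f(y)$, so the hypothesis $\nabla_{xy}f\ge 0$ becomes $a\ge b\ge 0$, and both statements reduce to elementary inequalities for two non-negative reals (the trivial case $a=b$ gives $0\ge 0$ in each).

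For part (a), I would look for an algebraic identity that makes the sign transparent. Expanding $(a^{p-2}+b^{p-2})(a-b)$ and collecting terms gives the cancellation
\begin{align*}
2\bigl(a^{p-1}-b^{p-1}\bigr)-(a^{p-2}+b^{p-2})(a-b)=(a+b)\bigl(a^{p-2}-b^{p-2}\bigr).
\end{align*}
For $p\ge 2$ and $a\ge b\ge 0$, both factors on the right are non-negative (the second because $t\mapsto t^{p-2}$ is non-decreasing on $[0,\infty)$), so the right side is $\ge 0$, which is precisely (a).

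For part (b), I would split at $p=2$. If $1<p\le 2$, then $t\mapsto t^{p-2}$ is non-increasing on $(0,\infty)$; by the mean value theorem applied to $\phi(t)=t^{p-1}$ we have
\begin{align*}
a^{p-1}-b^{p-1}=(p-1)\zeta^{p-2}(a-b)\quad\text{for some }\zeta\in[b,a],
\end{align*}
and $\zeta\le a$ yields $\zeta^{p-2}\ge a^{p-2}$, giving (b) with $C=p-1$. If $p\ge 2$, the monotonicity of $t\mapsto t^{p-2}$ is reversed, so $b^{p-2}\le a^{p-2}$ and therefore $b^{p-1}=b\cdot b^{p-2}\le a^{p-2}b$, from which
\begin{align*}
a^{p-1}-b^{p-1}\ge a^{p-1}-a^{p-2}b=a^{p-2}(a-b),
\end{align*}
giving (b) with $C=1$. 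Taking $C=(p-1)\wedge 1$ covers both ranges.

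The only mildly delicate step is spotting the factorization in (a); once it is in hand, both statements follow from elementary monotonicity of $t\mapsto t^{p-2}$, and I do not anticipate any serious obstacle.
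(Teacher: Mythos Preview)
Your proof is correct. Both parts reduce to the elementary inequalities you state, and your factorization
\[
2\bigl(a^{p-1}-b^{p-1}\bigr)-(a^{p-2}+b^{p-2})(a-b)=(a+b)\bigl(a^{p-2}-b^{p-2}\bigr)
\]
checks out; for $p\ge 2$ and $a\ge b\ge 0$ both factors on the right are non-negative, giving (a) in one line. The treatment of (b) is also fine in both ranges.

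The paper's approach to (a) is different and somewhat more laborious: it uses the identity $b^{p-1}-a^{p-1}=(b-a)(b^{p-2}+a^{p-2})+ab(b^{p-3}-a^{p-3})$ and then splits into the cases $p\ge 3$ (where the last term has the right sign directly) and $2\le p<3$ (where a convexity estimate for $t\mapsto t^{2-p}$ via an integral comparison is needed). Your single factorization avoids this case distinction entirely and is cleaner. For (b) with $1<p\le 2$ both arguments coincide (mean value theorem). For (b) with $p\ge 2$ the paper simply says ``follows from (a)'', which strictly speaking only yields $C=\tfrac12$ rather than the stated $C=1$; your direct estimate $a^{p-1}-b^{p-1}\ge a^{p-2}(a-b)$ actually recovers the constant $C=1$ claimed in the lemma.
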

\begin{proof}(a) Denote $a=f(y)$, $b=f(x)$. As it is the only non-trivial case, we assume $0<a<b$. Note that for $p\neq 1$
\begin{equation*}
b^{p-1}-a^{p-1}=(b-a)(b^{p-2}+a^{p-2})+ab(b^{p-3}-a^{p-3}).
\end{equation*}
Thus, the statement is immediate for $p\ge3$ since the second term
on the right side is non-negative in this case. Let $2\leq p<3$ and
note $a^{p-3}>b^{p-3}$. The function $t\mapsto t^{2-p}$ is convex on
$(0,\infty)$ and, thus, its image lies below the line segment
connecting $(b^{-1},b^{p-2})$ and $(a^{-1},a^{p-2})$. Therefore,
\begin{align*}
{a^{p-3}-b^{p-3}}&\leq   \frac{a^{p-3}-b^{p-3}}{(3-p)}=\int_{b^{-1}}^{a^{-1}}t^{2-p}dt\leq (a^{-1}-b^{-1})\Big(\frac{(b^{p-2}-a^{p-2})}{2}+a^{p-2}\Big)\\
&=\frac{1}{2ab}(b-a)(a^{p-2}+b^{p-2}).
\end{align*}
From the  equality in the beginning of the proof we now deduce the assertion in the case $2\leq p<3$.\\
(b) The case $p\ge2$ follows from (a). The case $1<p\leq 2$ in (b) follows directly from the mean value theorem.
\end{proof}


\section{Proofs for harmonic functions}\label{s:proofs}
In this section we prove the main theorems and the corresponding
corollaries for harmonic functions. It will be convenient to
introduce the following orientation on the edges. For a given
non-negative subharmonic function $f$, we let $E_{f}$ be the
set of
oriented edges $e=e_{+}e_{-}$ such that
\begin{align*}
    \nabla_{e}f\ge0,\quad\mbox{i.e., }\,f(e_{+})\ge f(e_{-}).
\end{align*}


\subsection{The key estimate}
The lemma below is vital for the proof of Theorem~\ref{t:Karp} and Theorem~\ref{t:Caccioppoli}.

\begin{lemma}\label{l:keyestimate}
Let $p\in (1,\infty)$, $0\leq\ph\in L^{\infty}(X)$ and $U=B_{s}(\supp\ph)$. Assume $\mathrm{Deg}$ is bounded on $U$. Then, for every non-negative subharmonic function $f$ with $f1_{U}\in L^{p}(X,m)$,
\begin{align*}
\sum_{e\in E_{f}}\mu_{e} f^{p-2}(e_{+})\ph^{2}(e_{-})|\nabla_{e}f|^{2} &\leq C \sum_{e\in E_{f},e\subset U} \mu_{e}f^{p-1}(e_{+})\ph(e_{-})\nabla_{e}f|\nabla_{e}\ph|,
\end{align*}
where $C=2/((p-1)\wedge 1)$.
\end{lemma}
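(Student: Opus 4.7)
The plan is to apply Green's formula (Lemma~\ref{l:Green}) to the subharmonic function $f$ against the test function $g=f^{p-1}\ph^{2}$. First I would check the hypotheses: since $\supp g \subseteq \supp \ph$ we have $B_{s}(\supp g) \subseteq U$, and since $(p-1)p^{*}=p$,
\begin{equation*}
\|g\|_{p^{*}}^{p^{*}} = \sum_{x\in X} |f(x)|^{p} \ph(x)^{2p^{*}} m(x) \le \|\ph\|_{\infty}^{2p^{*}} \|f1_{U}\|_{p}^{p} < \infty,
\end{equation*}
so $g\in L^{p^{*}}(X,m)$. Since $\Delta f \le 0$ and $g\ge 0$, Green's formula yields $\sum_{e\in E_{f}} \mu_{e} \nabla_{e} f \nabla_{e} g \le 0$.

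Next I would expand $\nabla_{e} g$ by the Leibniz rule (Lemma~\ref{l:Leibniz}) in the version that places $\ph^{2}(e_{-})$ in front of $\nabla_{e} f^{p-1}$, namely
\begin{equation*}
\nabla_{e}(f^{p-1}\ph^{2}) = \ph^{2}(e_{-})\nabla_{e} f^{p-1} + f^{p-1}(e_{+})\nabla_{e}\ph^{2}.
\end{equation*}
Since $\nabla_{e} f\ge 0$ on $E_{f}$, Lemma~\ref{l:MVI}(b) gives $\nabla_{e} f^{p-1} \ge C_{0}\, f^{p-2}(e_{+})\nabla_{e} f$ with $C_{0}=(p-1)\wedge 1$, so multiplying by $\nabla_{e} f$ the first contribution is bounded below by $C_{0} \sum_{e\in E_{f}} \mu_{e} \ph^{2}(e_{-}) f^{p-2}(e_{+}) |\nabla_{e} f|^{2}$. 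Rearranging we reach
\begin{equation*}
C_{0} \sum_{e\in E_{f}} \mu_{e} \ph^{2}(e_{-}) f^{p-2}(e_{+}) |\nabla_{e} f|^{2} \le -\sum_{e\in E_{f}} \mu_{e} f^{p-1}(e_{+}) \nabla_{e} f \nabla_{e}\ph^{2}.
\end{equation*}

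The main obstacle is that a naive factoring $\nabla_{e}\ph^{2} = (\ph(e_{+})+\ph(e_{-}))\nabla_{e}\ph$ would only yield $\ph(e_{+})\vee\ph(e_{-})$ on the right, not the desired single factor $\ph(e_{-})$. To handle this I would instead use the three-term Leibniz rule applied to $\ph\cdot\ph$, which produces the algebraic identity
\begin{equation*}
\nabla_{e}\ph^{2} = 2\ph(e_{-})\nabla_{e}\ph + (\nabla_{e}\ph)^{2}.
\end{equation*}
Substituting this into the inequality above and dropping the non-positive contribution $-\sum \mu_{e} f^{p-1}(e_{+}) \nabla_{e} f (\nabla_{e}\ph)^{2} \le 0$, which only tightens the upper bound, and finally bounding $-\nabla_{e}\ph \le |\nabla_{e}\ph|$ with $\nabla_{e} f\ge 0$, we arrive at
\begin{equation*}
C_{0} \sum_{e\in E_{f}} \mu_{e} \ph^{2}(e_{-}) f^{p-2}(e_{+}) |\nabla_{e} f|^{2} \le 2\sum_{e\in E_{f}} \mu_{e} \ph(e_{-}) f^{p-1}(e_{+}) \nabla_{e} f |\nabla_{e}\ph|.
\end{equation*}
To justify restricting the right-hand sum to $e\subset U$, observe that if $\nabla_{e}\ph \ne 0$ then at least one endpoint of $e$ lies in $\supp\ph$, hence both endpoints lie in $B_{s}(\supp\ph)=U$. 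Dividing by $C_{0}$ produces the constant $C=2/((p-1)\wedge 1)$ asserted in the statement.
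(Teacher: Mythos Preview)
Your proof is correct and follows essentially the same route as the paper: apply Green's formula to $f$ against $g=f^{p-1}\ph^{2}$, expand with the Leibniz rule, use the three-term identity $\nabla_{e}\ph^{2}=2\ph(e_{-})\nabla_{e}\ph+(\nabla_{e}\ph)^{2}$ to isolate the factor $\ph(e_{-})$, drop the sign-definite remainder, and invoke Lemma~\ref{l:MVI}(b). The only cosmetic differences are that the paper carries out the full expansion before applying the mean value inequality and notes the restriction $e\subset U$ from Green's formula at the outset, whereas you recover it at the end; both are harmless since terms with $e\not\subset U$ vanish.
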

\begin{proof}
From the assumptions  $f1_{U}\in L^{p}(X,m)$ and $\ph\in
L^{\infty}(X)$, we infer $\ph^2 f^{p-1}\in {L^{p^{*}}(X,m)}$ (as
$p^{*}=p/(p-1)$). Thus, compatibility of the pseudo metric implies
applicability of Green's formula with $f$ and $g=\ph^2 f^{p-1}$. We
start by using non-negativity and  subharmonicity of $f$ before
applying Green's formula (Lemma~\ref{l:Green}) and  the first and
second Leibniz rule (Lemma~\ref{l:Leibniz})
\begin{align*}
    0&\geq \sum_{x\in X}(\Delta f)(x)(\ph^{2} f^{p-1})(x)m(x)=\sum_{e\in E_{f},e\subset U}\mu_{e}\nabla_e f\nabla_{e}(\ph^{2} f^{p-1})\\
    &=\sum_{e\subset U}\mu_{e}\nabla_{e} f\big[\ph^{2}(e_{-})\nabla_{e} f^{p-1}+f^{p-1 }(e_{+})\nabla_{e}\ph^{2}\big]\\
    &=\sum_{e\subset U}\mu_{e}\nabla_{e} f\big[\ph^{2}(e_{-})\nabla_{e} f^{p-1}+2f^{p-1}(e_{+})\ph(e_{-})\nabla_{e}\ph +f^{p-1}(e_{+})|\nabla_{e}\ph|^{2}\big]\\
    &\geq C\sum_{e\subset U} \mu_{e}f^{p-2}(e_{+})\ph^{2}(e_{-}) |\nabla_{e}f|^{2}+2\sum_{e\subset U} \mu_{e}f^{p-1}(e_{+})\ph(e_{-})\nabla_{e} f\nabla_{e}\ph,
\end{align*}
where we dropped the third term in the third line since it is
non-negative because of  $\nabla_{e}f\ge0$  and we estimated the
first term on the right hand side using the mean value
{theorem}, Lemma~\ref{l:MVI}~(b). Absolute convergence of the
two terms in the last line can be checked using H\"older's
inequality and the assumptions $f1_{U}\in L^{p}(X,m)$, $\ph\in
L^{\infty}(X)$ and boundedness of $\mathrm{Deg}$ on $U$. Hence, we
obtain the statement of the lemma.
\end{proof}

\subsection{Proof of Karp's theorem}\label{s:Karp}

\begin{proof}[Proof of Theorem~\ref{t:Karp}]
Let $p\in(1,\infty)$ and let $f$ be a non-negative subharmonic function. Assume $f1_{B_{r}}\in L^{p}(X,m)$ for all $r\ge0$ since otherwise $\inf_{r_{0}}\int_{r_{0}}^{\infty} r/\|f1_{B_{r}}\|_{p}^{p}dr=0$.
Let $\eta=\eta_{r+s,R-s}$ with $0<r<R-3s$ (see
Section~\ref{s:intrinsic}). Then by  Lemma~\ref{l:keyestimate}
(applied with $\ph=\eta$) we obtain (noting additionally that $\nabla_{xy}\eta=0$, $x,y\in B_{r}$)
\begin{align*}
    \sum_{e\subset B_{R}} & \mu_{e}f^{p-2}(e_{+})\eta^{2}(e_{-}) |\nabla_{e}f|^{2} \leq C\sum_{e\subset B_{R}\setminus B_{r}} \mu_{e}f^{p-1}(e_{+})\eta(e_{-})\nabla_{e}f|\nabla_{e}\eta|.
\end{align*}
Now, the Cauchy-Schwarz inequality, $\sum_{e}\mu_{e}f^{p}(e_{+})|\nabla_{e}\eta|^{2}\leq \sum_{x,y}\mu_{xy}f^{p}(x)|\nabla_{xy}\eta|^{2}$ and the cut-off function lemma, Lemma~\ref{l:cutoff}, yield
\begin{align*}
\Big(  \sum_{e\subset B_{R}} & \mu_{e}f^{p-2}(e_{+})\eta^{2}(e_{-}) |\nabla_{e}f|^{2}  \Big)^{2} \\
&\leq C\Big(\sum_{e\subset B_{R}\setminus B_{r}}\mu_{e} f^{p}(e_{+})|\nabla_{e}\eta|^{2}\Big)\Big(\sum_{e\subset B_{R}\setminus B_{r}} f^{p-2}(e_{+})\eta^{2}(e_{-}) |\nabla_{e}f|^{2}\Big) \\
&\leq \frac{C}{(R-r)^{2}}\|f1_{B_{R}\setminus B_{r}}\|_{p}^{p}\left(\Big(\sum_{e\subset B_{R}}- \sum_{e\subset  B_{r}} \Big) f^{p-2}(e_{+})\eta^{2}(e_{-})|\nabla_{e}f|^{2}\right).
\end{align*}
Let $R_0\geq 3s$ be such that $f 1_{B_{R_{0}}}\neq 0$ and  denote $$v(r)=\|f1_{B_{r}}\|_{p}^{p},\qquad r\ge0.$$ Moreover, for $j\ge0$, let $R_j=2^{j}R_{0}$, $\ph_{j}=\eta_{R_{j}+s,R_{j+1}-s}$ and
\begin{align*}
Q_{j+1}&=\sum_{e\subset B_{R_{j+1}}}
\mu_{e}f^{p-2}(e_{+})\ph_{j}^{2}(e_{-}) |\nabla_{e}f|^{2}.
\end{align*}
As $\ph_{j-1}\leq\ph_{j}$, we get  $Q_{j}\leq
Q_{j+1}$ and together with the estimate above this implies
\begin{align*}
    Q_jQ_{j+1}\leq Q_{j+1}^2\leq C\frac{v(R_{j+1})}{(R_{j+1}-R_{j})^2}(Q_{j+1}-Q_{j}),\qquad j\ge0.
\end{align*}
Since $R_{j+1}=2R_{j}$, dividing the above inequality by
$\frac{v(R_{j+1})}{R_{j+1}^2}Q_{j}Q_{j+1}$ and adding $C/Q_{j+1}$
yield
\begin{align*}
\frac{R_{j+1}^{2}}{v(R_{j+1})}+\frac{C}{Q_{j+1}}\leq \frac{C}{Q_{j}}
\end{align*}
and, thus,
\begin{align*}
 \frac{1}{C}\sum_{j=1}^{\infty}\frac{R_{j+1}^{2}}{v(R_{j+1})}\leq \frac{1}{Q_{1}}.
\end{align*}
Now, the assumption $\int_{R_{0}}^{\infty}r/v(r)dr=\infty$ implies  $\sum_{j=0}^{\infty}\frac{R_{j}^{2}}{v(R_{j})}=\infty$. Therefore, $Q_{1}=0$. As this is true for all  $R_{0}$ large enough, we have
\begin{align*}
    f^{p-2}(e_{+}) |\nabla_{e}f|^{2}=0,
\end{align*}
for all edges $e$. For $p\ge2$, connectedness clearly implies that $f$ is constant. On the other hand, for $p\in(1,2]$, we always have $f^{p-2}(e_{+})>0$ and, thus,  $f$ is constant.
\end{proof}
\subsection{Proof of the Caccioppoli inequality}\label{s:Caccioppoli}

\begin{proof}[Proof of Theorem~\ref{t:Caccioppoli}]
Using Lemma~\ref{l:keyestimate} and the inequality $ab\leq \eps
a^{2}+\frac{1}{4\eps} b^{2}$, $\eps>0$, we estimate
\begin{align*}
\sum_{e\in E_{f}} \mu_{e}f^{p-2}(e_{+})\ph^{2}(e_{-})&|\nabla_{e}f|^{2}\leq C \sum_{e\in E_{f}} \mu_{e}f^{p-1}(e_{+})\ph(e_{-})\nabla_{e}f|\nabla_{e}\ph|\\
&\leq \frac{1}{2} \sum_{e\in E_{f}} \mu_{e}f^{p-2}(e_{+})\ph^{2}(e_{-}) |\nabla_{e}f|^{2}+C\sum_{e\in E_{f}} \mu_{e} f^{p}(e_{+})|\nabla_{e}\ph|^{2}.
\end{align*}
Letting $\ph=\eta=\eta_{r+s,R-s}$ with $0<r<R-3s$ (from Section~\ref{s:intrinsic}) and using the cut-off function lemma, Lemma~\ref{l:cutoff}, we arrive at
\begin{align*}
  \sum_{e\in E_{f}} \mu_{e}f^{p-2}(e_{+})|\nabla_{e}f|^{2}  &\leq C\sum_{e\in E_{f}} \mu_{e} f^{p}(e_{+})|\nabla_{e}\eta|^{2}
  \leq \frac{C}{(R-r)^{2}}\|f1_{B_{R}\setminus B_{r}}\|_{p}^{p}.
\end{align*}
\end{proof}

\begin{remark}\label{r:Caccioppoli2} In order to obtain the stronger statement for $p\in[2,\infty)$ mentioned in Remark~\ref{r:Caccioppoli}~(b), we  invoke Lemma~\ref{l:MVI}~(a) in the proof of Lemma~\ref{l:keyestimate} instead of Lemma~\ref{l:MVI}~(b) and proceed as in the proof above.
\end{remark}
\subsection{Proof of the corollaries}\label{s:corollaries}

In this section we prove the corollaries.

\begin{proof}[Proof of Corollary~\ref{c:Yau} (Yau's $L^{p}$ Liouville theorem)] Clearly the integral in Theorem~\ref{t:Karp} diverges if $f\in L^{p}(X,m)$.
\end{proof}

\begin{proof}[Proof of Corollary~\ref{c:domain} (Domain of the $L^{p}$ generators)]
Let $f\in L^{p}(X,m)\cap F(X)$ be such that $(\Delta+1)f=0$. Since
the positive and negative part $f_{+}$, $f_{-}$ of $f$ are
non-negative, subharmonic and in $L^{p}(X,m)$, they must be constant
by Corollary~\ref{c:Yau}. This implies $f_{\pm}\equiv0$ and, thus,
$f\equiv 0$. Now, the proof of the corollary works {literally
line by line} as  the proof of
\cite[Theorem~5]{KellerLenz12}.
\end{proof}

For the proof of Corollary~\ref{c:recurrence}  we recall the
following well known equivalent conditions for recurrence.

\begin{pro}[Characterization of recurrence]\label{p:recurrence} Let a connected graph $X$ be given. Then the following are equivalent.
\begin{itemize}
  \item [(i)] For the transition matrix $P$ with $P_{x,y}=\mu_{xy}/\sum_{z\in X}\mu_{xz}$, $x,y\in X$, we have
      $        \sum_{n=0}^{\infty}P^{(n)}(x,y)=\infty$
      for some (all) $x,y\in X$, where $P^{(n)}$ denotes the $n$-th power of $P$.
  \item [(ii)]  For $m\equiv 1$ and some (all) $x,y\in X$, we have $    \int_{0}^{\infty}e^{-tL}\de_{x}(y) dt=\infty$, where $\de_{x}(y)=1$ if $x=y$ and zero otherwise.
  \item [(iii)]  For all $m$ and some (all) $x,y\in X$,
we have
$    \int_{0}^{\infty}e^{-tL}\de_{x}(y) dt=\infty$.
  \item [(iv)] Every  bounded superharmonic (or subharmonic) function is constant.
  \item [(v)]  Every  non-negative superharmonic function is constant.
  \item [(vi)] Every  superharmonic (or subharmonic) function of finite energy is constant.
  \item [(vii)] $\mathrm{cap}(x):=\inf\{E(f)| f\in C_c(X), f(x)=1\}=0$ for
  some (all) $x\in X$
\end{itemize}
\end{pro}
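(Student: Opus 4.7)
The plan is to prove the seven equivalences by organizing them into three clusters -- the Green function conditions (i), (ii), (iii), the Liouville conditions (iv), (v), (vi), and the capacity condition (vii) -- and then establishing the transitions between clusters, largely by appealing to classical results in \cite{Woess00}, \cite{KellerLenz12} and \cite{HaeselerKellerLenzWojciechowski12}. The proposition collects well-known characterizations of recurrence from random walk theory and Dirichlet form potential theory, so I aim for a compact argument that carefully handles the non-locally-finite setting with arbitrary measure.

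First, (i) $\Leftrightarrow$ (ii) will follow from the subordination identity $e^{-t(\mathrm{Id}-P)} = e^{-t}\sum_{n\ge 0} (t^n/n!) P^n$ combined with Fubini, which shows that for $m\equiv 1$ the continuous-time Green function is finite iff the discrete-time Green function is (after accounting for the factor $\mathrm{Deg}$ built into $L$ via a pointwise comparison on a fixed vertex). For (ii) $\Leftrightarrow$ (iii), I argue that recurrence is invariant under changes of $m$: passing from $m$ to $m'$ corresponds to a time change of the continuous-time random walk and hence does not affect whether the walk visits a fixed vertex infinitely often. Equivalently, one checks that the existence of a nontrivial positive superharmonic function (condition (v), which we establish below to be equivalent) depends only on the network $(X,\mu)$, not on $m$.

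The equivalences within the Liouville cluster go as follows. (v) $\Rightarrow$ (iv) is immediate by adding $\|f\|_\infty$ to a bounded super/subharmonic function to get a non-negative superharmonic one; (iv) $\Rightarrow$ (v) uses that truncations $f\wedge N$ of a non-negative superharmonic function remain superharmonic (by a pointwise Jensen-type estimate applied to the concave map $t\mapsto t\wedge N$), yielding a bounded superharmonic function to which (iv) applies, so $f\wedge N$ is constant for every $N$, forcing $f$ to be constant. The sub/superharmonic versions in (iv) and (vi) interchange by passing to $-f$. Linking the clusters, (iii) $\Leftrightarrow$ (v) follows because the Green function $G(x,\cdot) = \int_0^\infty e^{-tL}\delta_x(\cdot)\,dt/m(\cdot)$ provides a nonconstant, non-negative superharmonic function when finite, while if (iii) holds an exhaustion by finite subgraphs together with a minimum principle (cf.~\cite[Theorem~8]{KellerLenz12}) forces every non-negative superharmonic function to be constant. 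Finally, (vi) $\Leftrightarrow$ (vii) is the standard Dirichlet form characterization: the equilibrium potential associated with $\{x\}$ has energy equal to $\mathrm{cap}(x)$, and $\mathrm{cap}(x)=0$ is equivalent to the non-existence of a nontrivial finite-energy superharmonic function by an $E$-projection argument.

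The main obstacle will be the careful treatment of the minimum principle and the truncation step $f\wedge N$ in the general, non-locally-finite setting, since the sums defining $\Delta f$ and $\Delta(f\wedge N)$ must be controlled simultaneously, and the exhaustion in (iii) $\Rightarrow$ (v) must be arranged along a suitable sequence of finite subsets (for instance distance balls of a compatible intrinsic metric, or the finite subgraphs used in \cite{KellerLenz12}). A secondary concern is the precise normalization in (i) $\Leftrightarrow$ (ii), where a constant must be absorbed into the comparison because $L\neq \mathrm{Id}-P$ unless $\mathrm{Deg}\equiv 1$; this is resolved by comparing the $n$-th iterates weighted by $\mathrm{Deg}(x)^{-1}$ at the fixed base vertex $x$, which does not affect the dichotomy.
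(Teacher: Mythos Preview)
Your plan differs from the paper's, which simply cites the literature (\cite{Schmidt12}, \cite{Woess00}, \cite{Soardi08}, \cite{Chen04}) for all equivalences except (iv)$\Leftrightarrow$(v), the latter being proved---as you do---by the truncation $f\wedge n$. Your more self-contained route is reasonable, but there are two concrete problems.

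\textbf{First, your chain of equivalences is disconnected.} You establish that (i)--(v) are mutually equivalent and that (vi)$\Leftrightarrow$(vii), but you never supply a bridge between these two blocks. Placing (vi) in the ``Liouville cluster'' does not by itself link it to (iv) or (v): finite energy is neither implied by nor implies boundedness or non-negativity. You need an explicit implication such as (vii)$\Leftrightarrow$(i) (the classical flow/capacity criterion, e.g.\ \cite[Theorem~2.12]{Woess00}) or not-(vii)$\Rightarrow$not-(v) via the equilibrium potential of $\{x\}$, which is a nonconstant non-negative superharmonic function when $\mathrm{cap}(x)>0$. As written, the argument does not close.

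\textbf{Second, the subordination formula you invoke for (i)$\Leftrightarrow$(ii) does not apply to the operator at hand.} For $m\equiv 1$ one has $\Delta = D(I-P)$ with $D=\mathrm{Deg}$, which is unbounded in general, so $e^{-tL}$ is \emph{not} given by $e^{-t}\sum_n t^n P^n/n!$, and the discrepancy is not a single scalar factor at a vertex but an unbounded multiplicative operator. What actually works is a resolvent/Green-function identity: formally $\int_0^\infty e^{-tL}\delta_x\,dt = (D(I-P))^{-1}\delta_x$, from which reversibility of $P$ with respect to $n$ yields $\int_0^\infty e^{-tL}\delta_x(y)\,dt = n(y)^{-1}\sum_n P^{(n)}(x,y)$, so the two Green functions diverge together. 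Making this rigorous for unbounded $L$ is precisely the content of \cite[Theorem~6]{Schmidt12} that the paper cites; your sketch does not yet do this.
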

A graph is called \emph{recurrent} if one of the equivalent statements of Proposition~\ref{p:recurrence} is satisfied.

\begin{proof} The equivalence (i)$\Leftrightarrow$(ii) is shown in \cite[Theorem~6]{Schmidt12} (confer \cite[Theorem~4.34]{Chen04}).
The equivalences (ii)$\Leftrightarrow$(vi)$\Leftrightarrow$(iii)
are in \cite[Theorem~2 and Theorem~9]{Schmidt12} (confer
\cite[Theorem~3.34]{Soardi08}). The equivalences
(i)$\Leftrightarrow$(v)$\Leftrightarrow$(vii) are found in
\cite[Theorem~1.16, Theorem~2.12]{Woess00}. The equivalence
(iv)$\Leftrightarrow$(v) follows since every non-negative
superharmonic function $f$ can be approximated by the bounded
superharmonic functions $f\wedge n$, $n\ge1$.
\end{proof}

\begin{proof}[Proof of Corollary~\ref{c:recurrence} (Recurrence)]
Theorem~\ref{t:Karp} implies that any non-negative bounded
subharmonic function $f$ is constant provided
$\inf_{r_{0}}\int_{r_{0}}^{\infty}r/m(B_{r})dr=\infty$ since $\|f
1_{B_{r}}\|_{p}^{p}\leq \|f\|^p_{\infty}m(B_r)$, $r\ge0$. By
Proposition~\ref{p:recurrence} the graph is recurrent.
\end{proof}


\section{$L^{1}$-Liouville theorem and counter-examples}\label{s:Counter-examples}

In this section we deal with the borderline case of the $L^p$ Liouville theorem $p=1.$
We first prove Theorem~\ref{t:L1Liouville} which deals with the stochastic complete case and then  give two examples which show that
there is no $L^1$ Liouville theorem for non-negative subharmonic functions in the general case.

A graph is called \emph{stochastically complete} if $e^{-tL}1=1$, where $1$ denotes the function that is constantly one on $X$. For the relevance of the concept see \cite{Grigoryan99,KellerLenz12,Wojciechowski1}.
The proof of Theorem~\ref{t:L1Liouville} follows along the lines of the proof of \cite[Theorem~13.2]{Grigoryan99}.

\begin{proof}[Proof of Theorem~\ref{t:L1Liouville}]If the graph is recurrent, then there are no non-constant non-negative superharmonic functions by
Proposition~\ref{p:recurrence}. So assume the graph is not recurrent
which implies $G(x,y)=\int_{0}^{\infty}e^{-tL}\de_{x}(y)dt<\infty$,
$x,y\in X$, again by Proposition~\ref{p:recurrence}. Let $K_{n}$, $n\ge0$, be an  sequence of finite sets  exhausting $X$  and
$G_{n}(x,y)=\int_{0}^{\infty}e^{-tL_{n}}\de_{x}(y)dt$, where $L_{n}$
are the finite dimensional operators arising from the restriction of
the form $Q$ to $C_{c}(K_{n})$. By domain monotonicity,
\cite[Proposition~2.6 and~2.7]{KellerLenz12} the semigroups
$e^{-tL_{n}}$ converge monotonously increasing to $e^{-tL}$ and,
hence, $G_{n}(x,y)\leq G(x,y)$ for $x,y\in K_{n}$, and $G_{n}\nearrow
G$, $n\to\infty$, pointwise. By direct calculation for any $x\in K_n$
\begin{align*}
    L_{n}G_{n}(x,y) =\int_{0}^{\infty}L_{n}e^{-tL_{n}}\de_{x}(y)dt=
    \int_{0}^{\infty}\partial_{t}e^{-tL_{n}}\de_{x}(y)dt =[e^{-tL_{n}}\de_{x}(y)]_{0}^{\infty}=\de_{x}(y)
\end{align*}
and, hence,  $G_{n}(x,\cdot)$ are harmonic on $K_{n}\setminus \{x\}$, $n\ge0$.

Let $u$ be a non-trivial non-negative superharmonic function which is strictly positive  by the minimum principle \cite[Theorem~8]{KellerLenz12}.
Let $U\subseteq X$ be finite with $o\in U\subseteq K_{n}$, $n\ge0$ and $C>0$ be such that $Cu\ge G(o,\cdot)$ on $U$. By the minimum principle $Cu\ge G_{n}(o,\cdot)$ on $K_{n}\setminus \{o\}$ and, hence,
$Cu\ge G(o,\cdot)$ on $X$ by the discussion above. If the graph is stochastically complete, then we get by Fubini's theorem
\begin{align*}
C  \|u\|_{1} \ge\|G(o,\cdot)\|_{1}=\int_{0}^{\infty}\sum_{x\in X}e^{-tL}\de_{o}(x)m(x)dt=\int_{0}^{\infty}e^{-tL}1(o)dt =\int_{0}^{\infty}dt=\infty.
\end{align*}
Hence, $u$ is not in $L^{1}(X,m)$.
\end{proof}
In the proof we show that in the non-recurrent case there are no
nontrivial superharmonic functions in  $L^{1}$. This is explained
since in the case of finite measure recurrence and stochastic
completeness are equivalent \cite[Theorem~12]{Schmidt12}.

Next, we show that in general there is no $L^{p}$ Liouville theorem
for $p\in(0,1].$ This is analogous to the situation in  Riemannian
geometry, where counter-examples were given by
\cite{Chung83,LiSchoen84}. Our first example is a graph of finite
volume and the second is of infinite volume.

\begin{example}[Finite volume]
Let $G=(X,\mu,m)$ be an infinite line graph, i.e., $X=\Z$ and $x\sim y$ iff
$|x-y|=1$ for $x,y\in \Z$. Define the edge weight by
$\mu_{xy}=2^{1-(|x|\vee |y|)}$ for $x\sim y$ and the measure $m$ by
$m(x)=(|x|+1)^{-2}2^{-|x|}$, $x\in\Z$, which implies
$m(X)<\infty$. The intrinsic metric $\de$ (introduced in
Example~\ref{ex:intrinsic}) is compatible as it satisfies
$\de(x,x+1)\ge C(|x|+1)^{-1}$ and, thus, $\sum_{x=-\infty}^{\infty}\de(x,x+1)=\infty$.
However, the function $f$ defined as
\begin{align*}
    f(x)=\mathrm{sign}(x)(2^{|x|}-1),\quad x\in\Z,
\end{align*}
is harmonic and,  clearly, $f\in L^{p}(X,m)$, $p\in(0,1]$.
\end{example}

\begin{example}[Infinite volume]
We can extend the example above to the infinite volume case. Let $G$
be the graph from above and $G'$ be a locally finite graph of infinite volume which allows for a compatible path metric. We glue
$G'$  to the vertex $x=0$ of the graph $G$ by identifying a vertex in $G'$ with $x=0$. Next, we extend the path metrics in the  natural way and obtain (by renormalizing the edge weights of the metric at the edges around $x=0$ if necessary) again a compatible intrinsic metric and
the graph has infinite volume. Moreover, we extend $f$ on $G$ from above by zero to $G'$ and obtain a harmonic function which is in $L^{p}$, $p\in(0,1]$.
\end{example}


\section{Applications of Karp's theorem}\label{s:applicationKarp}

In this section we prove Theorem~\ref{t:applicationKarp} and give several applications which mainly circle around the case of finite measure.

\begin{proof}[Proof of Theorem~\ref{t:applicationKarp}]
We assume that for some $p\in (1,\infty)$ the non-negative subharmonic function $f$ is in $L^{p}(X,m \rho_{1}^{-2})$ and, hence, $f^{p}\rho_{1}^{-2}\in L^{1}(X,m)$.
For large $r_{0}\ge 1$, we estimate
\begin{align*}
    \int_{r_{0}}^{\infty} \frac{r}{\|f1_{B_{r}}\|_{p}^{p}}dr
    \geq
    \int_{r_{0}}^{\infty} \frac{r}{r^2\|f^{p}\rho^{-2}_{1}1_{B_{r}}\|_{1}} dr \ge C\int_{r_{0}}^{\infty} \frac{1}{r}dr=\infty.
\end{align*}
Hence, Theorem~\ref{t:Karp} implies that $f$ is constant.
\end{proof}

Next, we turn to several consequences of
Theorem~\ref{t:applicationKarp}. A function $f:X\to\R$ is said to
\emph{grow less than} a function $g:[0,\infty)\to(0,\infty)$ if
there are $\beta\in(0,1)$ and $C>0$ such that
$$f(x)\leq C g^{\beta}(\rho_{1}(x)),\quad x\in X. $$
We say $f$ \emph{grows  polynomially} if $f$ grows less than a polynomial.

We say the measure $m$ has a \emph{finite $q$-th moment}, $q\in \R$, with respect to an intrinsic metric $\rho$ if
$$\rho_{1}\in L^{q}(X,m),$$
where $\rho_{1}=1\vee\rho(\cdot,o)$.
This assumption implies that all balls have finite measure and if $q\ge0$ it also implies $m(X)<\infty$.

\begin{cor}[Measures with finite moments] \label{c:finitemomentmeasure} Assume a connected weighted graph allows for a compatible intrinsic metric and the measure has a finite $q$-th moment, $q\in\R$.
 Then every non-negative subharmonic function $f$ that grows less than $r\mapsto  r^{q+2}$
is constant. In particular, if $q>-2$, then boundedness of $f$ implies $f$ is constant.
\end{cor}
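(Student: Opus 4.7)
The plan is to reduce Corollary~\ref{c:finitemomentmeasure} to Theorem~\ref{t:applicationKarp} by producing a suitable exponent $p \in (1,\infty)$ for which $f \in L^{p}(X, m\rho_{1}^{-2})$. Everything else is then automatic.

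By the growth hypothesis applied to $g(r) = r^{q+2}$, there exist $\beta \in (0,1)$ and $C>0$ with
\begin{align*}
f(x) \leq C\,\rho_{1}(x)^{(q+2)\beta}, \qquad x \in X.
\end{align*}
I would then take $p := 1/\beta$, which lies in $(1,\infty)$ because $\beta \in (0,1)$. With this choice $p(q+2)\beta = q+2$, so
\begin{align*}
\int_{X} f^{p}\,\rho_{1}^{-2}\,dm \;\leq\; C^{p}\int_{X}\rho_{1}^{\,p(q+2)\beta - 2}\,dm \;=\; C^{p}\int_{X}\rho_{1}^{q}\,dm,
\end{align*}
and the finite $q$-th moment assumption $\rho_{1}\in L^{q}(X,m)$ makes the right-hand side finite. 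Hence $f \in L^{p}(X, m\rho_{1}^{-2})$, and Theorem~\ref{t:applicationKarp} forces $f$ to be constant.

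For the ``in particular'' clause, assume $q > -2$ and $f \leq M$. Since $\rho_{1} \geq 1$ and $(q+2)\beta > 0$ for every $\beta \in (0,1)$, we have $f \leq M \leq M\,\rho_{1}^{(q+2)\beta}$, so $f$ grows less than $r \mapsto r^{q+2}$ and the first part applies.

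There is no serious obstacle here: the only point to watch is that the exponent $p = 1/\beta$ produced by the growth exponent $\beta \in (0,1)$ genuinely lies in the admissible range $(1,\infty)$ of Theorem~\ref{t:applicationKarp}, and that the exponents in the integral collapse precisely to the moment exponent $q$. Both are immediate from the choice $p\beta = 1$.
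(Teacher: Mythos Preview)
Your proof is correct and is essentially identical to the paper's argument: the paper also bounds $f^{1+\eps}\rho_{1}^{-2}\leq C\rho_{1}^{q}$ using the growth hypothesis and then invokes Theorem~\ref{t:applicationKarp}, with your explicit choice $p=1/\beta$ playing the role of their $p=1+\eps$. The treatment of the ``in particular'' clause is likewise the same observation (the paper leaves it implicit).
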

\begin{proof} If $f$ grows less than $r\mapsto r^{q+2}$, then there is $\eps>0$ such that
$f^{1+\eps}\rho_{1}^{-2}\leq C \rho_{1}^{q}$ on $X$. By the assumption
$\rho_{1}\in L^{q}(X,m)$ it follows $f\in L^{p}(X,m\rho_{1}^{-2})$
for $p=1+\eps$. Hence, the assertion follows from
Theorem~\ref{t:applicationKarp}.
\end{proof}

Letting $q=0$ in the above theorem gives the following immediate corollary.

\begin{cor}[Finite measure] \label{c:finitemeasure} Assume a connected weighted graph allows for a compatible intrinsic metric and $m(X)<\infty.$ Then every non-negative subharmonic function $f$ that grows less than quadratic
is constant. In particular, $f\in L^{\infty}(X)$ implies that $f$ is constant.
\end{cor}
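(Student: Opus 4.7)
The plan is to derive Corollary~\ref{c:finitemeasure} as the immediate specialization $q=0$ of the just-established Corollary~\ref{c:finitemomentmeasure}, so no new analytic work is needed; I only need to unwind the two definitions (finite $q$-th moment, and ``grows less than'') and check that the hypotheses match.

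First I would observe that the assumption $m(X)<\infty$ coincides with ``the measure has a finite $0$-th moment with respect to $\rho$'': indeed, by the stated definition, $m$ has finite $0$-th moment iff $\rho_{1}\in L^{0}(X,m)$, which simply reads $\int_X \rho_1^{0}\,dm=\int_X 1\,dm=m(X)<\infty$. With $q=0$ the growth function in Corollary~\ref{c:finitemomentmeasure} becomes $r\mapsto r^{q+2}=r^{2}$, that is, precisely ``less than quadratic''. Applying Corollary~\ref{c:finitemomentmeasure} therefore gives the first assertion: every non-negative subharmonic $f$ growing less than quadratic is constant.

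For the ``in particular'' clause I would verify that any bounded non-negative subharmonic function $f$ falls under this growth hypothesis. Recall $\rho_{1}=1\vee\rho(\cdot,o)\ge 1$ everywhere, so for any $\beta\in(0,1)$ and $g(r)=r^{2}$ one has
\begin{align*}
g^{\beta}(\rho_{1}(x))=\rho_{1}^{2\beta}(x)\ge 1,\qquad x\in X,
\end{align*}
and therefore $f(x)\le \|f\|_{\infty}\le \|f\|_{\infty}\,g^{\beta}(\rho_{1}(x))$. Thus bounded $f$ grows less than quadratic in the sense of the paper and Corollary~\ref{c:finitemomentmeasure} applies once more.

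There is really no obstacle here: the nontrivial content—reducing the growth hypothesis to an $L^{p}(X,m\rho_1^{-2})$ condition and invoking Theorem~\ref{t:applicationKarp}—was carried out in the proof of Corollary~\ref{c:finitemomentmeasure}. The only point to be careful about is the pedantic one above, namely that the definition of ``grows less than'' requires $\beta\in(0,1)$ strictly, which is still consistent with a bounded function because $\rho_1^{2\beta}\ge 1$. Once that is noted, both statements follow in one line apiece.
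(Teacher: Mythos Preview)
Your proposal is correct and follows exactly the paper's approach: the paper's proof is literally the single sentence ``Letting $q=0$ in the above theorem gives the following immediate corollary,'' and your unwinding of the finite $0$-th moment condition and the growth definition is precisely the content of that specialization. The ``in particular'' clause is already contained in Corollary~\ref{c:finitemomentmeasure} for $q>-2$, so your explicit check that bounded functions grow less than quadratic is a harmless elaboration of what the paper leaves implicit.
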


The final corollary of this section is a consequence of Corollary~\ref{c:Yau}.
\begin{cor}[Exponentially decaying measure]\label{c:decayingmeasure} Assume a connected weighted graph allows for a compatible intrinsic metric and {$m(X)<\infty,$} and there is $\beta>0$ such
that
\begin{align*}
    \limsup_{r\to\infty}\frac{1}{r^{\beta}}\log m(B_{r+1}\setminus B_r) <0.
\end{align*}
Then  every non-negative subharmonic function that grows polynomially
is constant.
\end{cor}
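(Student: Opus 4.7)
The plan is to reduce this to Yau's $L^{p}$ Liouville theorem, Corollary~\ref{c:Yau}. Specifically, I will show that the combination of polynomial growth of $f$ and the exponential decay of $m$ on annuli forces $f \in L^{p}(X,m)$ for some (indeed every) $p \in (1,\infty)$, after which Corollary~\ref{c:Yau} immediately forces $f$ to be constant.

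First I would unpack the two hypotheses into explicit pointwise estimates. By the definition in Section~\ref{s:applicationKarp}, polynomial growth of $f$ means there exist $C_{0}>0$ and $M>0$ with
\begin{align*}
f(x) \le C_{0}\,\rho_{1}(x)^{M}, \qquad x\in X.
\end{align*}
The decay assumption supplies constants $c>0$ and $R_{0}\ge 1$ such that $m(B_{r+1}\setminus B_{r}) \le e^{-c r^{\beta}}$ for every $r\ge R_{0}$.

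Fix some $p\in(1,\infty)$ (any $p$ will do, say $p=2$). I would split the $p$-norm into a near part and a far part,
\begin{align*}
\|f\|_{p}^{p} \;=\; \sum_{x\in B_{R_{0}}} f(x)^{p} m(x) \;+\; \sum_{r=R_{0}}^{\infty}\ \sum_{x\in B_{r+1}\setminus B_{r}} f(x)^{p} m(x).
\end{align*}
The near part is bounded by $C_{0}^{p}(R_{0}+1)^{pM} m(X) < \infty$, using $m(X)<\infty$ and the uniform pointwise bound on $B_{R_{0}}$. For the far part, the polynomial-growth bound yields $f(x)^{p}\le C_{0}^{p}(r+1)^{pM}$ on the annulus $B_{r+1}\setminus B_{r}$, and the decay hypothesis collapses the double sum to $C_{0}^{p}\sum_{r\ge R_{0}}(r+1)^{pM} e^{-cr^{\beta}}$, which converges because exponential decay dominates any polynomial factor. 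Hence $f\in L^{p}(X,m)$, and Corollary~\ref{c:Yau} gives the conclusion.

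There is no real obstacle in this argument; it is essentially a bookkeeping exercise. The only mildly non-routine step is translating the paper's definition of polynomial growth into the clean pointwise inequality $f\le C_{0}\rho_{1}^{M}$, after which the chain of inequalities is dictated by the hypotheses.
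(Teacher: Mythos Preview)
Your proof is correct and follows essentially the same route as the paper: both arguments bound $f^{p}$ by a power of $\rho_{1}$, decompose $X$ into annuli, use the exponential decay of $m(B_{r+1}\setminus B_{r})$ to show the resulting series converges, and then invoke Corollary~\ref{c:Yau}. The paper's version is just a terser one-line rendering of the same computation.
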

\begin{proof}
If a non-negative subharmonic function  $f$ grows polynomially, then there is $q>0$ such that
{\begin{align*}
    \|f\|_{p}^{p}\leq C \sum_{x\in X}\rho_1^{q}(x,o)m(x)\leq C \sum_{r=1}^{\infty}r^{q}m(B_{r}\setminus B_{r-1})+C<\infty
\end{align*}} by the assumption on the measure. Hence, the theorem follows
from Corollary~\ref{c:Yau}.
\end{proof}


\section{Applications to harmonic maps}\label{s:harmonicmaps}

Harmonic maps between metric measure spaces were introduced by Jost \cite{Jost94,Jost97,Jost97NPC,Jost98} and harmonic maps from graphs
into Riemannian manifolds or metric spaces have been studied by many
authors, e.g.
\cite{KotaniSunada01,IzekiNayatani05,JostTodjihounde07} and and for alternative definitions, see
\cite{GromovSchoen92,KorevarrSchoen93,KorevarrSchoen97,Sturm01,Sturm05}.

We use our results {concerning the function
theory on graphs to derive various Liouville type theorems for
harmonic maps from graphs}. A particular focus lies on bounded
harmonic maps and harmonic maps of finite energy.

Let $(X,\mu,m)$ be a weighted graph. We briefly recall the set up of Hadamard spaces and harmonic maps.

A complete geodesic space $(Y,d)$ is called an \emph{NPC space} if
it locally satisfies Toponogov's triangle comparison for
non-positive sectional curvature. We refer to Burago-Burago-Ivanov
\cite{Burago01}, Jost \cite{Jost97NPC} and
Bridson-Haefliger\cite{BridsonHaefliger99} for definitions. Here NPC
stands for ``non-positive curvature'' in the sense of Alexandrov.
The space $(Y,d)$ is called an \emph{Hadamard space}, if the
Toponogov's triangle comparison holds globally, i.e., holds for
arbitrary large geodesic triangles. A simply connected NPC space is
an Hadamard space, see \cite{Burago01}. For the sake of simplicity,
we only consider Hadamard spaces, also called $\mathrm{CAT}(0)$
spaces, as targets of harmonic maps $X\to Y$. For general NPC
spaces, we may pass to the universal covers of $X$ and $Y,$ and
consider the equivariant harmonic maps, see Jost
\cite{Jost94,Jost97}.

Let $b: P^1(Y)\to
Y$ denote the \emph{barycenter map} on $Y$, where $P^1(Y)$ is the space of
probability measures on $Y$ with finite first moment, that is,
$b(\nu)$ is the \emph{barycenter} of the probability measure $\nu$ on $Y,$
see e.g.  Sturm \cite[Propositon 4.3]{Sturm03} and confer \cite[Definition~2.2 and Example 1]{KuwaeSturm08}.

We define the random walk measure $P_{x}$ of $x\in X$ by
\begin{align*}
P_x(y):=\frac{\mu_{xy}}{\sum_{z\in X}\mu_{xz}}
\end{align*}
and denote by $u_*P_x$ the push forward of the probability measure
$P_x$ under the map $u:X\to Y$. In order to carry out the barycenter
construction for a map $u$ pointwise, we need $u_*P_x\in P^1(Y)$
which means that $u_*P_x$ has a finite first moment. Thus, similar
to the harmonic function case, we define a class of maps
$$F(X,Y):=\{u:X\to Y\mid \sum_{y\in X}
d(u(y),y_0)P_{x}(y)<\infty\mbox{ for all }x\in X,y_0\in Y \}.$$

\begin{definition}[Harmonic map]\label{d:harmonicmaps}
A map $u:X\to Y$ is called a \emph{harmonic map} if $u\in F(X,Y)$
and for every $x\in X$
$$u(x)=b(u_*P_x).$$
\end{definition}
One immediately finds that the measure $m$ plays no role in the
definition of harmonic maps. {In the following, we always
denote by $u:X\to Y$ a harmonic map from a weighted graph into an
Hadamard space. }

\subsection{Proof of Theorem~\ref{t:harmonicmaps}}\label{s:harmonicmapsproof}

The proof of Theorem~\ref{t:harmonicmaps} is a rather immediate consequence of Theorem~\ref{t:applicationKarp} and the following lemma which is a consequence of Jensen's inequality and convexity of distance functions on Hadamard spaces.

\begin{lemma}\label{l:subharmonic}
For every harmonic map $u$ the functions $X\to[0,\infty)$, $x\mapsto
d(u(x),y)$, for fixed $y\in Y$, are subharmonic.
\end{lemma}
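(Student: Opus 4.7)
The plan is to reduce subharmonicity of $f(x):=d(u(x),y)$ to Jensen's inequality for the barycenter map on the Hadamard space $(Y,d)$. First, I would rewrite the inequality $\Delta f(x)\le 0$ in the form
\begin{align*}
f(x)\le \sum_{y'\in X} P_{x}(y')f(y'),
\end{align*}
which is just the standard observation that $\Delta$ is $1/m(x)$ times the difference between $f(x)$ and its $P_{x}$-average. So it suffices to show
\begin{align*}
d(u(x),y)\le \sum_{y'\in X} P_{x}(y')\,d(u(y'),y),
\end{align*}
together with the absolute convergence of the right-hand side (which ensures $f\in F(X)$ so that $\Delta f$ is well defined).

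The key analytic input is that on an Hadamard space the function $z\mapsto d(z,y)$ is convex and lower semicontinuous, and that the barycenter satisfies Jensen's inequality: for any $\nu\in P^{1}(Y)$ and any convex lower semicontinuous $\varphi:Y\to\R$ one has $\varphi(b(\nu))\le \int_{Y}\varphi\,d\nu$. This is a standard consequence of the variational characterization of $b(\nu)$ (see e.g.\ Sturm \cite{Sturm03}, Kuwae--Sturm \cite{KuwaeSturm08}). Applying this with $\varphi=d(\cdot,y)$ and $\nu=u_{*}P_{x}$, and then using harmonicity $u(x)=b(u_{*}P_{x})$ together with the change-of-variables formula $\int_{Y}d(z,y)\,d(u_{*}P_{x})(z)=\sum_{y'\in X}P_{x}(y')\,d(u(y'),y)$, I obtain exactly the subharmonic mean-value inequality displayed above.

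It remains to verify the integrability/membership in $F(X)$. Since $u\in F(X,Y)$, the series $\sum_{y'}P_{x}(y')\,d(u(y'),y_{0})$ converges for every $y_{0}\in Y$; choosing $y_{0}=y$ and using the triangle inequality $|d(u(y'),y)|\le d(u(y'),y_{0})+d(y,y_{0})$ (equivalently, taking $y_{0}=y$ directly) shows that $\sum_{y'}\mu_{xy'}|f(y')|<\infty$, so $f\in F(X)$ and the application of Jensen is justified. I do not foresee a genuine obstacle here: the only thing that deserves care is quoting the correct form of Jensen for barycenters on CAT(0) spaces with measures of finite first moment, and confirming that $u_{*}P_{x}$ lies in $P^{1}(Y)$, both of which follow from the definitions already set up in Section~\ref{s:harmonicmaps}.
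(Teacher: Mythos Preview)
Your proposal is correct and follows essentially the same route as the paper: apply Jensen's inequality for the barycenter map on Hadamard spaces (as in Sturm \cite{Sturm03}) to the convex function $z\mapsto d(z,y)$ and the pushed-forward measure $u_{*}P_{x}$, together with the defining relation $u(x)=b(u_{*}P_{x})$. Your treatment is in fact slightly more detailed than the paper's, since you spell out the verification that $f\in F(X)$ via $u\in F(X,Y)$, which the paper leaves implicit.
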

\begin{proof}Jensen's inequality in Hadamard spaces, see
\cite[Theorem~6.2]{Sturm03}, states that for every lower semi-continuous convex
function $g:Y\to [0,\infty)$  and $\nu\in P^1(Y)$
$$g(b(\nu))\leq \int_{Y} g(y)\nu(dy).$$
Now any distance function $y\mapsto d(y,y_0)$ to a point $y_{0}\in
Y$ is  convex in an Hadamard space, see e.g. \cite[Corollary~9.2.14]{Burago01}, which
yields the statement.
\end{proof}

\begin{proof}[Proof of Theorem~\ref{t:harmonicmaps}] Combining Theorem~\ref{t:applicationKarp} and Lemma~\ref{l:subharmonic} yields
that $x\mapsto d(u(x),y)$ is constant. Hence, $u$ is bounded. If
$m\rho_{1}^{-2}(X)=\sum_{x\in X}m(x)\rho_{1}^{-2}(x)=\infty$, then a
constant function is in $L^{p}(X, m\rho_{1}^{-2})$ if and only if it
is zero. Moreover, if $y$ is in the image of $u$ then
$d(u(\cdot),y)\equiv 0$ and hence $u(x)=y$ for all $x\in X$.
\end{proof}

\subsection{Harmonic maps of finite energy}\label{s:harmonicmapsoffiniteenergy}

In this section we consider harmonic maps of finite energy and prove
Theorem~\ref{t:boundedfiniteengery} and
Theorem~\ref{t:boundedfiniteengery2} which are analogues to theorems
of Cheng-Tam-Wang \cite{ChengTamWan96} from Riemannian geometry. We
say a harmonic map $u:X\to Y$  has \emph{finite energy} if
\begin{align*}
\frac{1}{2}\sum_{x,y\in X}\mu_{xy} d^2(u(x),u(y))<\infty.
\end{align*}

In order to do so, we need the equivalence of boundedness of  finite
energy {harmonic functions on a
graph and that of non-negative subharmonic functions}. Recall that a
function $f:X\to\R$ is said to have finite energy if $E(f)<\infty$,
see Section~\ref{s:Laplacians}. In Riemannian geometry such a
theorem was first proven in \cite[Theorem~1.2]{ChengTamWan96}. We
give a different proof here in the discrete setting by Royden's
decomposition.

\begin{thm}\label{t:finiteenergyequivalence} For connected weighted graphs every harmonic function with finite energy is bounded if and only if every non-negative subharmonic function with finite energy is bounded.
\end{thm}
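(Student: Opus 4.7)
The plan is to handle the two implications separately, with the transient/recurrent dichotomy only arising in the harder direction.

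Assume first that every non-negative subharmonic function of finite energy is bounded, and let $f$ be a harmonic function with $E(f)<\infty$. The functions $f_+$ and $f_-$ are non-negative and subharmonic: from $\De f=0$ one has $f(x)=(\sum_y\mu_{xy})^{-1}\sum_y\mu_{xy}f(y)$, and convexity of $t\mapsto t_\pm$ then gives $\De f_\pm\le0$. Since $|\nabla_{xy}f_\pm|\le|\nabla_{xy}f|$ we also have $E(f_\pm)\le E(f)<\infty$, so the hypothesis forces $f_\pm$, and hence $f=f_+-f_-$, to be bounded.

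Conversely, assume every harmonic function of finite energy is bounded, and fix $f\ge0$ subharmonic with $E(f)<\infty$. If the graph is recurrent, then $f$ is already constant by Proposition~\ref{p:recurrence}(vi), so assume the graph is transient; Proposition~\ref{p:recurrence}(vii) then gives $\mathrm{cap}(\{x\})>0$ for every $x\in X$. Fix an exhaustion $K_1\subset K_2\subset\cdots$ of $X$ by finite sets and define $h_n:X\to\R$ as the unique solution of
\begin{align*}
\De h_n=0\text{ on }K_n,\qquad h_n=f\text{ on }X\setminus K_n.
\end{align*}
Solvability is immediate: on $K_n$ this is a symmetric, strictly diagonally dominant finite linear system, whose right-hand side involves only $\sum_{y\notin K_n}\mu_{xy}f(y)<\infty$ by $f\in F(X)$.

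I then collect three properties of $(h_n)$, all from the discrete maximum principle on finite sets combined with Green's formula (Lemma~\ref{l:Green}). First, $f\le h_n$ everywhere, since $f-h_n$ is subharmonic on $K_n$ and vanishes on $X\setminus K_n$. Second, $h_n\le h_{n+1}$, since $h_n-h_{n+1}$ is harmonic on $K_n$ and non-positive outside. Third, $h_n-f\in C_c(K_n)$ and $\De h_n=0$ on $K_n$ give the orthogonality $\langle h_n,h_n-f\rangle_E=0$, so that
\begin{align*}
E(h_n-f)=E(f)-E(h_n)\le E(f).
\end{align*}
Applying the capacity inequality $g(x)^2\,\mathrm{cap}(\{x\})\le E(g)$ (immediate from the variational definition of capacity) with $g=h_n-f\in C_c(X)$ yields the uniform pointwise bound
\begin{align*}
(h_n(x)-f(x))^2\le\frac{E(f)}{\mathrm{cap}(\{x\})},\qquad x\in X,\ n\ge 1.
\end{align*}

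Hence $h_n(x)\nearrow h(x)\ge f(x)$ for some finite-valued limit $h$, with $E(h)\le\liminf_n E(h_n)\le E(f)$ by Fatou. For fixed $x$ and $n$ so large that $x\in K_n$, the identity $(\sum_y\mu_{xy})h_n(x)=\sum_y\mu_{xy}h_n(y)$ together with monotone convergence on the non-negative sequences $h_n(y)\nearrow h(y)$ yields $\De h(x)=0$. Thus $h$ is harmonic of finite energy, hence bounded by hypothesis, and the sandwich $0\le f\le h$ forces $f$ to be bounded. The main obstacle is this last limit exchange, since the graph need not be locally finite; however, monotone convergence on the non-negative summands $\mu_{xy}h_n(y)$ makes the step routine once the pointwise bound from capacity is in hand.
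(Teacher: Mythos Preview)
Your argument is correct and essentially self-contained, whereas the paper's proof for the nontrivial implication is a two-line appeal to Royden's decomposition \cite[Theorem~3.69]{Soardi08} and the accompanying \cite[Lemma~3.70]{Soardi08}: in the transient case one writes $f=g+h$ with $h$ harmonic of finite energy and $g$ in the $\|\cdot\|_o$-closure of $C_c(X)$, observes that $g=f-h$ is subharmonic and hence $g\le 0$, and concludes $0\le f\le h$. What you do instead is reprove the relevant part of Royden's decomposition by hand: your $h=\lim_n h_n$ is precisely the harmonic component, obtained as the increasing limit of solutions of finite Dirichlet problems, and the capacity inequality replaces the abstract completeness argument. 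The payoff of your route is that it avoids importing the Royden machinery and makes the role of transience (positive capacity) completely explicit; the paper's route is shorter but relies on a nontrivial external reference.

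One small point: when you invoke ``Green's formula (Lemma~\ref{l:Green})'' for the orthogonality $\langle h_n,h_n-f\rangle_E=0$, note that Lemma~\ref{l:Green} as stated assumes $\Deg$ is bounded on $U$, an assumption not available in Theorem~\ref{t:finiteenergyequivalence}. This is harmless, since for a test function in $C_c(X)$ against any $u\in F(X)$ the identity $\sum_x (\Delta u)(x)g(x)m(x)=\tfrac12\sum_{x,y}\mu_{xy}\nabla_{xy}u\,\nabla_{xy}g$ follows by a direct rearrangement (finitely many absolutely convergent inner sums), without any bound on $\Deg$; but it would be cleaner to say so rather than cite the lemma whose hypotheses are not met.
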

\begin{proof}As positive and negative part of a harmonic function are non-negative and subharmonic functions, boundedness of non-negative subharmonic functions of finite energy implies boundedness of harmonic functions of finite energy. \\
We now turn to the other direction. By
Proposition~\ref{p:recurrence} there are no non-constant subharmonic
functions of finite energy in the case the graph is recurrent.
Therefore, we assume the graph is not recurrent {(also called
transient in the connected case)}. Let $f$ be a non-negative subharmonic function with
finite energy. Then by the discrete version of Royden's
decomposition theorem, see \cite[Theorem~3.69]{Soardi08}, there are
unique functions $g$ and $h$ where $g$ is in the completion of
$C_{c}(X)$ under the norm $\| \ph\|_{o}= (E(\ph)+\ph(o)^{2})^{1/2},$
$\ph\in C_{c}(X)$, and
 $h$ is a harmonic function of finite energy
such that
\begin{align*}
    f=g+h\quad\mbox{and}\quad E(f)=E(g)+E(h)
\end{align*}
By \cite[Lemma~3.70]{Soardi08}, $g\leq 0$ {since $g$ is
subharmonic}. Therefore, $0\leq f\leq h.$ By assumption $h$ is
bounded and, therefore, $f$ is bounded.
\end{proof}

\begin{proof}[Proof of Theorem~\ref{t:boundedfiniteengery}] Let $u:X\to Y$ be a harmonic map of finite energy. For some fixed $y_{0}\in Y$ the function $f=d(u(\cdot),y_{0})$ is non-negative and subharmonic by Lemma~\ref{l:subharmonic}. Furthermore, by the triangle inequality and the assumption that $u$ has finite energy we get
\begin{align*}
    E(f)=\frac{1}{2} \sum_{x,y}\mu_{xy}(f(x)-f(y))^{2}\leq\frac{1}{2}\sum_{x,y\in X}\mu_{xy}d^{2}(u(x),u(y))<\infty.
\end{align*}
Now, by Theorem~\ref{t:finiteenergyequivalence} we get that $f$ as a
non-negative subharmonic function of finite energy must be bounded
whenever every harmonic function of finite energy on $X$ is bounded
(which is our assumption). Thus, $u$ is bounded.
\end{proof}

Theorem~\ref{t:boundedfiniteengery2} is a consequence of
Theorem~\ref{t:boundedfiniteengery} and the  theorem of Kuwae-Sturm
\cite{KuwaeSturm08} below which goes back to Kendall in the manifold
case \cite[Theorem~6]{Kendall88} {(confer
\cite{HuangKendall91,LiWang98,KuwaeSturm08}). However, although it is not explicitly mentioned in \cite{KuwaeSturm08} one actually needs an
additional assumption on the local compactness of the target, i.e.,
every point has a precompact neighborhood.}

\begin{thm}\label{t:Kendallthm}\emph{(Kendall's theorem \cite[Theorem~3.1]{KuwaeSturm08})}
Assume that on a connected weighted graph every  bounded harmonic
functions is constant. Then, every bounded harmonic map into a
locally compact Hadamard space is constant.
\end{thm}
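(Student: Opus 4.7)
The plan is to adapt Kendall's original martingale argument (as in \cite{KuwaeSturm08}) to the discrete setting. The key ingredient is the variance inequality for barycenters in Hadamard spaces, which converts a bounded harmonic map into a family of bounded non-negative subharmonic real-valued functions whose behavior is controlled by the Liouville hypothesis.

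First I would record the variance inequality for barycenters (Sturm): for any probability measure $\nu$ on $Y$ with finite second moment and any $y\in Y$,
\[
d^{2}(b(\nu),y) + \int_{Y} d^{2}(z,b(\nu))\,d\nu(z) \leq \int_{Y} d^{2}(z,y)\,d\nu(z).
\]
Specializing to $\nu=u_{*}P_{x}$ and using harmonicity $u(x)=b(u_{*}P_{x})$, this yields for the function $f_{y}(x):=d^{2}(u(x),y)$ the pointwise estimate
\[
\Delta f_{y}(x) \leq -\Deg(x)\sum_{z\in X} P_{x}(z)\,d^{2}(u(z),u(x)).
\]
In particular $f_{y}$ is subharmonic, and it is bounded because local compactness of $Y$ together with the Hopf--Rinow theorem for complete Alexandrov spaces of non-positive curvature makes $\overline{u(X)}$ compact in $Y$. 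Moreover, the right-hand side is strictly negative at any $x$ where $u$ is not locally constant.

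Next I would split the argument according to whether the graph is recurrent. If it is recurrent, Proposition~\ref{p:recurrence}(iv) forces the bounded subharmonic function $f_{y}$ to be constant for every $y\in Y$, which in turn forces the right-hand side of the above display to vanish, so $u(z)=u(x)$ whenever $x\sim z$, and connectedness gives $u$ constant. If the graph is transient, apply the Riesz decomposition to the bounded non-negative subharmonic function $f_{y}$: write $f_{y}=h_{y}-p_{y}$, where $h_{y}$ is the bounded least harmonic majorant and $p_{y}$ a bounded non-negative potential. By the Liouville hypothesis, each $h_{y}\equiv c_{y}$ is constant. Using the standard potential-theoretic fact that bounded potentials on a transient graph decay to zero along every sequence leaving all finite sets, we get $f_{y}(x_{n})\to c_{y}$ for every such sequence $x_{n}$. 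Compactness of $\overline{u(X)}$ allows a diagonal extraction producing a single limit point $z^{*}\in\overline{u(X)}$ with $u(x_{n})\to z^{*}$ along any escaping sequence, the point $z^{*}$ being unique since any two limit points $z,z'$ must satisfy $d^{2}(z,y)=c_{y}=d^{2}(z',y)$ for all $y\in Y$ and hence (setting $y=z$) coincide. Feeding $y=z^{*}$ back into the construction yields $c_{z^{*}}=0$, so the bounded non-negative subharmonic function $f_{z^{*}}$ is dominated by the constant $0$, giving $f_{z^{*}}\equiv 0$ and thus $u\equiv z^{*}$.

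The main obstacle is the transient case, where one must invoke the (standard but non-trivial) fact that bounded potentials on transient graphs vanish at infinity, and then carry out the diagonal extraction producing a single limit point $z^{*}$ and the rigidity step $c_{z^{*}}=0$. The local compactness hypothesis on $Y$ enters exactly at two points: it guarantees via Hopf--Rinow that $\overline{u(X)}$ is compact (so all $f_{y}$ are bounded), and it ensures that the limit point $z^{*}$ extracted from the diagonal argument actually lies in $Y$, so that the output can legitimately be substituted back into the variance inequality.
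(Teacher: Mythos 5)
This theorem is not proved in the paper at all --- it is imported verbatim from Kuwae--Sturm \cite{KuwaeSturm08} --- so your attempt has to be judged on its own merits rather than against an internal argument. Much of it is sound: the variance inequality for barycenters does give
$\Delta\, d^{2}(u(\cdot),y)(x)\le -\Deg(x)\sum_{z}P_{x}(z)\,d^{2}(u(z),u(x))$
(since $u$ is bounded, $u_{*}P_{x}$ has finite second moment and $f_{y}\in F(X)$); the Hopf--Rinow argument showing that local compactness makes $\overline{u(X)}$ compact is correct; and the recurrent case is disposed of correctly via Proposition~\ref{p:recurrence}(iv), since constancy of $f_{y}$ forces the non-positive defect term to vanish edge by edge.

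The transient case, however, rests on a false statement: bounded potentials on a transient graph do \emph{not} in general tend to zero along every sequence leaving all finite sets. What is true is that $P^{n}p\to 0$ pointwise, equivalently that $p(X_{n})\to 0$ almost surely along the trajectories of the random walk ($p(X_{n})$ is a non-negative supermartingale whose expectations $P^{n}p(x_{0})$ tend to $0$). Deterministic vanishing at infinity fails already for the Green kernel of a single point: glue a recurrent ray to a vertex $x_{0}$ of a transient graph; the union stays transient, but every vertex $x$ on the ray hits $x_{0}$ with probability one, so $G(x,x_{0})=G(x_{0},x_{0})>0$ along an escaping sequence. The Liouville hypothesis does not rescue the claim either: on $\Z^{3}$ the potential of a sufficiently sparse infinite family of unit point charges is bounded yet bounded away from $0$ along that family. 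Hence your extraction of the limit point $z^{*}$ from arbitrary escaping sequences is not justified. The repair is precisely Kendall's original, probabilistic route: run the whole argument along the random walk $(X_{n})$, use supermartingale convergence to get $f_{y}(X_{n})\to c_{y}$ a.s.\ simultaneously for a countable dense set of $y$ in the compact set $\overline{u(X)}$, obtain the a.s.\ limit $z^{*}$ of $u(X_{n})$ by your uniqueness argument, and then conclude $c_{z^{*}}=0$ and $f_{z^{*}}\equiv 0$ exactly as you do. With that substitution the remaining steps, including the two places where local compactness enters, are correct.
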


Next, we come to the proof of Theorem~\ref{t:boundedfiniteengery2}.

\begin{proof}[Proof of Theorem~\ref{t:boundedfiniteengery2}]
Let $f$ be a harmonic function on $X$ of finite energy. By a
discrete version of Virtanen's theorem, see
\cite[Theorem~3.73]{Soardi08}, $f$ can be approximated by bounded
harmonic functions $f_{n}$ of finite energy (with respect to the
norm $\|\ph\|_{o}=(E(\ph)+\ph(o)^{2})^{1/2}$). By assumption the functions
$f_{n}$, $n\ge1$, are constant and, thus, $f$ must be constant.
Theorem~\ref{t:boundedfiniteengery} implies now that any harmonic
map is bounded and, thus, Theorem~\ref{t:Kendallthm} implies that
every harmonic map is constant.
\end{proof}


\subsection{Harmonic maps and assumptions on the measure of $X$}\label{s:HarmonicMapsmeasure}
In this subsection we collect several quantitative results that
follow from what we have proven before.

The first corollary can be seen as an analogue to  Yau's $L^{p}$-Liouville type theorem.

\begin{cor} \label{t:infintemeasure_harmonicmaps} Assume a connected weighted graph $X$ allows for a compatible intrinsic metric and let $u$ be a harmonic map into an Hadamard space $Y$. If there is $y\in Y$ such that $d(u(\cdot),y)\in L^{p}(X,m)$ for some $p\in(1,\infty)$, then $u$ is bounded. If additionally $m(X)=\infty$, then $u$ is constant.
\end{cor}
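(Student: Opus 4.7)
The plan is to mimic the proof of Theorem~\ref{t:harmonicmaps} almost verbatim, with the measure $m\rho_{1}^{-2}$ replaced by $m$ and Theorem~\ref{t:applicationKarp} replaced by its immediate consequence Corollary~\ref{c:Yau}. The two ingredients I will rely on are Lemma~\ref{l:subharmonic} (distance functions to a fixed point composed with a harmonic map are non-negative and subharmonic on $X$) and Yau's $L^{p}$-Liouville theorem (Corollary~\ref{c:Yau}), whose hypotheses on the graph (connectedness plus the existence of a compatible intrinsic metric) are exactly those assumed in the statement.

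First I would fix the point $y\in Y$ provided by the hypothesis and define $f\colon X\to[0,\infty)$ by $f(x):=d(u(x),y)$. Lemma~\ref{l:subharmonic} immediately gives that $f$ is non-negative and subharmonic, and the assumption $d(u(\cdot),y)\in L^{p}(X,m)$ is literally the statement $f\in L^{p}(X,m)$ for some $p\in(1,\infty)$. Corollary~\ref{c:Yau} then forces $f$ to be constant. Since $f$ is the distance in $Y$ from $u(\cdot)$ to the fixed basepoint $y$, its constancy means that $u(X)$ lies on a single metric sphere around $y$ in the Hadamard space $Y$, and hence $u$ is bounded. This establishes the first assertion.

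For the second assertion I would invoke the elementary observation that a nonzero constant function cannot belong to $L^{p}(X,m)$ when $m(X)=\infty$. Under this additional hypothesis the constant value of $f$ must therefore be $0$, so $d(u(x),y)=0$ for every $x\in X$, which gives $u\equiv y$ and $u$ is constant.

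I do not anticipate any genuine obstacle: the subharmonicity of $d(u(\cdot),y)$ and the $L^{p}$-triviality theorem are both already in hand, and the corollary is essentially a repackaging of Corollary~\ref{c:Yau} in the language of Hadamard-valued maps. The only small point worth checking is that Corollary~\ref{c:Yau}'s hypotheses really are met by $f$, but this is immediate once Lemma~\ref{l:subharmonic} has been applied.
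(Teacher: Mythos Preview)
Your proposal is correct and follows essentially the same route as the paper: apply Lemma~\ref{l:subharmonic} to see that $f=d(u(\cdot),y)$ is non-negative and subharmonic, invoke Corollary~\ref{c:Yau} to conclude $f$ is constant (hence $u$ is bounded), and then observe that a nonzero constant cannot lie in $L^{p}(X,m)$ when $m(X)=\infty$. The paper's proof is exactly this argument, written in slightly fewer words.
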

\begin{proof}
The function $d(u(\cdot),y)$ is subharmonic, by
Lemma~\ref{l:subharmonic}, and in $L^{p}(X,m)$ by assumption. Hence,
Corollary~\ref{c:Yau} yields $d(u(\cdot),y)$ is constant. The
assumption of infinite measure implies $d(u(x),y)=0$ for all $x\in
X.$
\end{proof}

We say a harmonic map $u$ into an Hadamard space $(Y,d)$
\emph{grows less than} a function $g:[0,\infty)\to(0,\infty)$ if
$d(u(\cdot),y)$ grows less than $g$ for some $y\in Y$ (confer
Section~\ref{s:applicationKarp}). The next two corollaries are
analogues of Corollary~\ref{c:finitemomentmeasure} and
Corollary~\ref{c:finitemeasure}.

\begin{cor}[Measures with finite moment -- harmonic maps] \label{t:finitemomentmeasure_harmonicmaps} Assume a connected weighted graph allows for a compatible intrinsic metric and the measure has a finite $q$-th moment, $q>-2$. Then every harmonic map into an Hadamard space that grows less that $r\mapsto r^{q+2}$ is constant. In particular, bounded harmonic maps and harmonic maps with finite energy are constant.
\end{cor}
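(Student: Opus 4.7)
The plan is to reduce all three claims to results already established for non-negative subharmonic functions, applied to the auxiliary function $f:=d(u(\cdot),y_{0})$ for an appropriate base point $y_{0}\in Y$. By Lemma~\ref{l:subharmonic}, $f$ is non-negative and subharmonic for every choice of $y_{0}$. Moreover, the triangle inequality $|\nabla_{xy}f|\le d(u(x),u(y))$ shows that $f$ inherits finite energy from $u$ whenever the latter has it.

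For the main growth statement I would take $y_{0}$ witnessing the growth hypothesis, so that $f$ grows less than $r\mapsto r^{q+2}$. Corollary~\ref{c:finitemomentmeasure} then yields $f\equiv c$ for some $c\ge 0$. To upgrade scalar constancy to constancy of $u$, pick any $x_{0}\in X$ and set $y_{1}:=u(x_{0})$; by the triangle inequality $d(u(\cdot),y_{1})\le c+d(y_{0},y_{1})$, so $d(u(\cdot),y_{1})$ is bounded. Because $q>-2$, a bounded non-negative function automatically grows less than $r\mapsto r^{q+2}$ (any $\beta\in(0,1)$ with $\beta(q+2)>0$ works), so a second application of Corollary~\ref{c:finitemomentmeasure} shows $d(u(\cdot),y_{1})$ is constant; since it vanishes at $x_{0}$, it vanishes identically, hence $u\equiv y_{1}$. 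The bounded case of the ``in particular'' assertion falls out at once, since bounded harmonic maps satisfy the growth condition when $q>-2$.

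For the finite-energy case I would proceed differently and first establish that the graph is recurrent under the moment hypothesis. The finite $q$-th moment with $q>-2$ gives a polynomial volume bound: when $q\ge 0$ we have $m(X)<\infty$ directly (since $\rho_{1}^{q}\ge 1$), and when $-2<q<0$ the pointwise lower bound $\rho_{1}^{q}(x)\ge r^{q}$ on $B_{r}$ combined with $\rho_{1}^{q}\in L^{1}(X,m)$ yields $m(B_{r})\le Cr^{-q}$. In both cases $\int_{1}^{\infty}r/m(B_{r})\,dr\ge C\int_{1}^{\infty}r^{1+q}\,dr=\infty$ because $1+q>-1$, so the graph is recurrent by Corollary~\ref{c:recurrence}. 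If $u$ has finite energy, then $f$ is a non-negative subharmonic function of finite energy by the first paragraph, hence constant by Proposition~\ref{p:recurrence}(vi). The promotion argument of the previous paragraph then upgrades this to $u$ being constant.

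The main obstacle is the passage from constancy of $d(u(\cdot),y_{0})$ to constancy of $u$ itself, which genuinely fails if $y_{0}$ is not in the image of $u$. The clean workaround is to invoke the scalar result twice, with $y_{1}=u(x_{0})$ on the second pass; this is precisely the point where the strict inequality $q>-2$ (as opposed to $q\ge -2$) is needed, since it guarantees $r^{q+2}\to\infty$ and hence that arbitrary bounded non-negative functions fit into the growth class governed by Corollary~\ref{c:finitemomentmeasure}.
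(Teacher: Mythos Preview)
Your argument is correct. For the growth statement and the bounded case you do essentially what the paper does: apply Corollary~\ref{c:finitemomentmeasure} to $d(u(\cdot),y)$ and use the triangle inequality together with $q+2>0$ to move the base point into the image of $u$. (The paper phrases this as ``for all $y\in Y$ the functions $d(u(\cdot),y)$ grow less than $r\mapsto r^{q+2}$'' and then applies the corollary once, but the content is identical to your two-pass version.)

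For the finite-energy clause you take a genuinely different route. The paper first observes that bounded harmonic functions are constant and then invokes Theorem~\ref{t:Kendallthm} and Theorem~\ref{t:boundedfiniteengery2}, which in turn rely on Royden's decomposition, Virtanen's theorem, and Kendall's theorem. You instead extract from the moment hypothesis the volume bound $m(B_r)\le C r^{(-q)\vee 0}$, feed this into Corollary~\ref{c:recurrence} to obtain recurrence, and then appeal to Proposition~\ref{p:recurrence}(vi) for the finite-energy subharmonic function $d(u(\cdot),y_0)$. This is more elementary and self-contained, and it has the pleasant side effect of removing any dependence on Kendall's theorem---in particular, your argument does not need local compactness of the target Hadamard space, whereas the paper's route through Theorem~\ref{t:boundedfiniteengery2} does. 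The paper's approach, on the other hand, isolates a reusable principle (constancy of bounded harmonic functions $\Rightarrow$ constancy of finite-energy harmonic maps) that applies beyond the recurrent case.
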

\begin{proof}Let $u$ be a harmonic map.
Since we assume $q+2>0$, we get by the triangle inequality that for all $y\in Y$ the subharmonic (Lemma~\ref{l:subharmonic})
functions $d(u(\cdot),y)$ grow less than $r\mapsto r^{q+2}$. Hence,
by Corollary~\ref{c:finitemomentmeasure} the subharmonic function
$d(u(\cdot),y)$ is constant for all $y$ which implies that $u$ is
constant. This proves the first assertion. Since $q+2>0,$ it is easy
to see that every bounded harmonic function on $X$ is constant. The
second assertion follows from Theorem~\ref{t:Kendallthm} and
Theorem~\ref{t:boundedfiniteengery2}.
\end{proof}

\begin{cor}[Finite measure -- harmonic maps] \label{t:finitemeasure_harmonicmaps} Assume a connected weighted graph allows for a compatible intrinsic metric and $m(X)<\infty.$  Then every harmonic map into an Hadamard space  that grows less than quadratic is constant. In particular, bounded harmonic maps and harmonic maps with finite energy are constant.
\end{cor}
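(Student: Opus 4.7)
The plan is to mirror the argument of Corollary~\ref{t:finitemomentmeasure_harmonicmaps} in the special case $q=0$, reducing the harmonic-map statement to the scalar Liouville theorem for non-negative subharmonic functions provided by Corollary~\ref{c:finitemeasure}. The guiding observation is that $m(X)<\infty$ plays the role of the finite moment condition and that the growth hypothesis propagates from any one base point to every other base point through the triangle inequality.

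First I would fix an arbitrary $y'\in Y$ and consider the function $f_{y'}:=d(u(\cdot),y')$. By Lemma~\ref{l:subharmonic}, $f_{y'}$ is non-negative and subharmonic on $X$. The hypothesis that $u$ grows less than quadratic means that $d(u(x),y)\le C\rho_{1}(x)^{2\beta}$ for some base point $y\in Y$ and some $\beta\in(0,1)$, $C>0$. The triangle inequality then gives
\begin{align*}
f_{y'}(x)\le d(u(x),y)+d(y,y')\le C\rho_{1}(x)^{2\beta}+d(y,y')\le C'\rho_{1}(x)^{2\beta}
\end{align*}
(using $\rho_{1}\ge 1$ in the last step), so $f_{y'}$ also grows less than quadratic. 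Since $m(X)<\infty$, Corollary~\ref{c:finitemeasure} forces $f_{y'}$ to be constant.

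To turn this scalar constancy into constancy of $u$ itself, I would choose any $x_{0}\in X$ and apply the previous step with $y'=u(x_{0})$. The constant value of $d(u(\cdot),u(x_{0}))$ is its value at $x_{0}$, which is zero, so $u(x)=u(x_{0})$ for every $x\in X$. For the "in particular" clause, bounded harmonic maps trivially grow less than quadratic, so the first part applies. For a harmonic map of finite energy I would invoke Theorem~\ref{t:boundedfiniteengery2}; its hypothesis that every bounded harmonic function on $X$ is constant follows by writing such a function as $h_{+}-h_{-}$ and applying Corollary~\ref{c:finitemeasure} to each of $h_{\pm}$, which are non-negative, subharmonic, and bounded, hence constant by the finite-measure Liouville theorem.

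No serious obstacle is expected; the only point requiring a bit of care is verifying that the growth bound really does transfer between base points via the triangle inequality, which is precisely what allows Corollary~\ref{c:finitemeasure} to be applied to $d(u(\cdot),u(x_{0}))$ and thereby produce the conclusion $u\equiv u(x_{0})$ rather than only constancy of some scalar auxiliary function.
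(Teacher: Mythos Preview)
Your proposal is correct and follows essentially the same route as the paper, which simply says ``the statements follow directly by the corollary above putting $q=0$'' --- i.e., it specializes Corollary~\ref{t:finitemomentmeasure_harmonicmaps} to $q=0$, which is exactly what you spell out. The one minor deviation is that for bounded harmonic maps you argue directly that they grow less than quadratic and invoke the first part, whereas the paper routes this through Kendall's theorem (Theorem~\ref{t:Kendallthm}); your argument is more elementary and has the advantage of not requiring local compactness of the target, while the finite-energy case via Theorem~\ref{t:boundedfiniteengery2} is handled identically.
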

\begin{proof}The statements follow directly by the corollary above putting $q=0$.
\end{proof}

Finally we say that a harmonic map \emph{grows polynomially} if it
grows less than a polynomial and state a corollary analogous to
Corollary~\ref{c:decayingmeasure}.

\begin{thm}[Exponentially decaying measure -- harmonic maps]\label{t:decayingmeasure_harmonicmaps} Assume a connected weighted graph allows for a compatible intrinsic metric, $m(X)<\infty$ and there is $\beta>0$ such that
\begin{align*}
    \limsup_{r\to\infty}\frac{1}{r^{\beta}}\log m(B_{r+1}\setminus B_r) <0.
\end{align*}
Then  every  harmonic map into an Hadamard space that grows polynomially is constant. In particular, bounded harmonic maps and harmonic maps with finite energy are constant.
\end{thm}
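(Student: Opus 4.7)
The plan is to follow the template of Corollaries~\ref{t:finitemomentmeasure_harmonicmaps} and~\ref{t:finitemeasure_harmonicmaps}, building on the harmonic-function version of the statement (Corollary~\ref{c:decayingmeasure}) together with Lemma~\ref{l:subharmonic}. The bridge from the real-valued subharmonic theory to harmonic maps into an Hadamard space is, as before, that for every fixed $y\in Y$ the function $x\mapsto d(u(x),y)$ is non-negative and subharmonic on $X$, so that Corollary~\ref{c:decayingmeasure} can be fed pointwise distance functions instead of abstract $\R$-valued functions.

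For the main statement, let $u\colon X\to Y$ be a harmonic map with polynomial growth. By definition there is $y_{0}\in Y$, a polynomial $P$, constants $C>0$ and $\beta\in(0,1)$ with $d(u(\cdot),y_{0})\le C\,P(\rho_{1})^{\beta}$. For an arbitrary $y\in Y$ the triangle inequality gives $d(u(x),y)\le d(u(x),y_{0})+d(y_{0},y)$, so $f_{y}:=d(u(\cdot),y)$ again grows polynomially (after enlarging $C$). By Lemma~\ref{l:subharmonic} $f_{y}$ is non-negative and subharmonic, hence Corollary~\ref{c:decayingmeasure} forces $f_{y}$ to be constant. Choosing $y=u(x_{0})$ for some $x_{0}\in X$ yields $f_{y}\equiv 0$, and therefore $u\equiv u(x_{0})$ is constant.

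For the "In particular" part, a bounded harmonic map trivially has polynomial growth (the distance to any target point is dominated by a degree-zero polynomial), so the main statement applies directly. For the finite-energy case, the route is via Theorem~\ref{t:boundedfiniteengery2} (with $Y$ locally compact, as is implicit here in analogy with Corollary~\ref{t:finitemomentmeasure_harmonicmaps}), whose only hypothesis is that every bounded harmonic function on $X$ be constant. Given a bounded harmonic function $f$, the functions $f_{+}$ and $f_{-}$ are bounded, non-negative and subharmonic (Remark~\ref{r:Karp}(a)), hence grow polynomially and are constant by Corollary~\ref{c:decayingmeasure}. Consequently $f$ is constant, and Theorem~\ref{t:boundedfiniteengery2} finishes the finite-energy case.

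The argument is essentially bookkeeping on top of Corollary~\ref{c:decayingmeasure}; the only subtlety is transferring polynomial growth of $d(u(\cdot),y_{0})$ for one basepoint to every other target point, which the triangle inequality accomplishes in one line, so I do not expect any genuine obstacle beyond careful application of the previously established scalar result.
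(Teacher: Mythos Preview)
Your proof is correct and follows essentially the same approach as the paper, which simply cites Corollary~\ref{c:decayingmeasure}, Theorem~\ref{t:Kendallthm} and Theorem~\ref{t:boundedfiniteengery2}; you have merely unpacked the argument along the lines of the analogous Corollary~\ref{t:finitemomentmeasure_harmonicmaps}. The only minor difference is that you deduce the bounded case directly from the main statement (bounded $\Rightarrow$ polynomial growth), whereas the paper routes it through Kendall's theorem; your route is marginally cleaner since it avoids the local compactness hypothesis for that particular case.
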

\begin{proof}The statements follow from Corollary~\ref{c:decayingmeasure}, Theorem~\ref{t:Kendallthm} and Theorem~\ref{t:boundedfiniteengery2}.
\end{proof}

\textbf{Acknowledgement}. {BH thanks J\"urgen Jost for
inspiring discussions on $L^p$ Liouville theorem and constant
support, and acknowledges the financial support from the funding of
the European Research Council under the European Union's Seventh
Framework Programme (FP7/2007-2013) / ERC grant agreement
n$^\circ$~267087.} MK enjoyed discussions with Gabor Lippner, Dan
Mangoubi, Marcel Schmidt and Rados{\l}aw Wojciechowski on the
subject and acknowledges the financial support of the German Science
Foundation (DFG), Golda Meir Fellowship, the Israel Science
Foundation (grant no. 1105/10 and  no. 225/10) and BSF grant no.
2010214.

\bibliography{Lqharmonic2}
\bibliographystyle{alpha}

\end{document}